\documentclass[onenumber,plainnumbering,english,11pt]{amsart}
\usepackage[utf8]{inputenc}
\usepackage[square,numbers]{natbib}
\usepackage[mathscr]{eucal}
\usepackage{amssymb,amsmath,amscd,mathrsfs,paralist,amsthm}
\usepackage{mathrsfs}
\usepackage{bm}
\usepackage[T1]{fontenc}

\usepackage{tikz-cd} 
\usepackage{tikz}
\usetikzlibrary{shapes.geometric, arrows, cd}                      
\usepackage{bm}
\usepackage{mathtools, nccmath}
\usepackage{xpatch}
\xpatchcmd{\NCC@ignorepar}{%
\abovedisplayskip\abovedisplayshortskip}
{%
\abovedisplayskip\abovedisplayshortskip%
\belowdisplayskip\belowdisplayshortskip}
{}{}
\usepackage{graphicx}
\usepackage{pgfplots}
\pgfplotsset{compat=1.15}
\usetikzlibrary{positioning, shapes.geometric}
\usepgfplotslibrary{colormaps}
 
\usepackage{tikz-cd}
\usetikzlibrary{arrows, matrix}
\usepackage[all,cmtip]{xy}
\usepackage[colorlinks,linkcolor=black,urlcolor=black]{hyperref}
\hypersetup{
     colorlinks   = true,
     citecolor    = black}
\usepackage{pgfplots}
\pgfplotsset{compat=newest}
\usepackage[font=small,labelsep=none]{caption}
\usetikzlibrary{matrix}
\usetikzlibrary{cd}
\usepackage[colorlinks,linkcolor=black,urlcolor=black]{hyperref}
\hypersetup{
     colorlinks   = true,
     citecolor    = black}
\usepackage[all,cmtip]{xy}
\setlength{\bibsep}{0\baselineskip}
\theoremstyle{plain}
\newtheorem{theorem}{Theorem}[section]

\newtheorem{lemma}[theorem]{Lemma}
\newtheorem{proposition}[theorem]{Proposition}

\theoremstyle{definition}
\newtheorem{definition}[theorem]{Definition}

\theoremstyle{remark}
\newtheorem{remark}[theorem]{Remark}
\makeatletter
\let\amstexbig\big
\def\newbig#1{%
  \ifx#1|%
    \expandafter\@firstoftwo
  \else
    \expandafter\@secondoftwo
  \fi
  {\big@bar}%
  {\amstexbig{#1}}%
}
\AtBeginDocument{\let\big\newbig}
\def\big@bar{\bBigg@{1.1}|}
\makeatother

	

\numberwithin{equation}{section}
\textwidth=15cm \hoffset=-1.2cm 
\setlength{\unitlength}{1mm}
\newcommand{\R}{\mathbb R}
\newcommand{\N}{\mathbb N}

\newcommand{\C}{\mathbb C}

\newcommand{\theoref}[1]{Theorem~\ref{#1}}
\newcommand{\propref}[1]{Proposition~\ref{#1}}

\newcommand{\lemref}[1]{Lemma~\ref{#1}}

\newcommand{\defiref}[1]{Definition~\ref{#1}}
\newcommand{\secref}[1]{Section~\ref{#1}}
\newcommand{\subsecref}[1]{Subsection~\ref{#1}}
\makeatletter
\@namedef{subjclassname@2020}{\textup{2020} Mathematics Subject Classification}
\makeatother
\begin{document}
\title{Holomorphic projective connections on surfaces and osculating spaces}
\author{Oumar Wone}
\address{Oumar Wone}
    \email{wone@chapman.edu}   
\begin{abstract}
We study complex analytic projective connections on surfaces in projective $n$-spaces in terms of the "second" neighborhood of the surface in the ambient space, and in terms of the osculating behavior of the integral curves. We also investigate the action of a remarkable rational transformation on projective connections, and give the geometrical interpretation of joint invariants of a group closely related to the study of equivalence classes of projective connections on surfaces.\end{abstract}
\keywords{projective connections, osculating spaces, projective differential geometry, geometric (differential) invariants}
\subjclass[2020]{(primary) 53A20, 34A26, 34A34, 34C14, 58A20, 58H05, 35N10}  
\maketitle
 \tableofcontents
 \section{Introduction}
 A projective structure on a manifold $M$ (real analytic or complex) is an equivalence class of torsion-free connections on the tangent bundle $TM$ (real or holomorphic) of $M$, where two connections $\nabla$ and $\nabla^\prime$ are projectively equivalent, notated $\nabla\sim\nabla^\prime$, when there exists a $1$-form $\omega$ (real or holomorphic) on $M$ such that
 $$\nabla_X^\prime Y=\nabla_XY+\omega(X)Y+\omega(Y)X$$
 for all $X$, $Y\in\mathcal{X}(M)$, \cite{kobayashi2, weyl}. Equivalent connections determine the same geodesic curves, up to reparametrization. We call the set of unparametrized geodesics of a projective structure, a projective connection. In the case of a surface such a projective is characterized by a differential equation of the form \eqref{eqpr11}.
 
 Over the years there has been many different approaches to study of projective connections.
 
 It is in $19^{th}$ century mathematics (differential geometry of surfaces) that one finds the first examples of projective connections. They are in this case given by the geodesics of a Levi-Civita connection on a surface. This approach was expounded in the works of Gauss, Beltrami \cite{beltrami}, and others. It bears the question of when a given projective structure contains a Levi-Civita connection (possibly pseudo-Riemannian). This type of question was first investigated by Beltrami \cite{beltrami}, for the case of surfaces of constant curvature, and later on by Liouville \cite{liouville}, and more recently by \cite{bryant2009, krug2008, eastwood2020}. For a review of the classical approach and period of the topic we refer to \cite{arnold1980, lie1883, tresse1894, liouville1889, liouville18872, liouville18892, olver}.
 
 Besides there is also the approach of Cartan \cite{cartan1924}. Cartan studied in \cite{cartan1924}, see also \cite{sharpe}, what one may term Cartan 'projective connections' (a notion different for what we introduced above). This entails the study of manifolds $M$ locally modeled on the Klein geometry $PGL(n+1)/H=\R P_n$ (or $\C P_n$), via a so-called Cartan connection. Here $PGL(n+1)$ stands for $PGL(n+1,\R)$ or $PGL(n+1,\C)$ according to the case, and $H$ is the subgroup of $PGL(n+1)$ fixing a point of $\R P_n$ (or $\C P_n$). When instead of $H$ one considers its subgroup $H_1$ fixing a line in $\R P_n$ (or $\C P_n$) and a point on this line, one obtains a Cartan geometry modeled on $G/H_1:=PT(\R P_n)$ (or $PT(\C P_n)$). It is this latter model which is relevant in the study of the so-called path geometry, the simplest case of which corresponds locally to a scalar second order ODE, and happens when $n=2$. In this viewpoint of Cartan's the geodesics of the Cartan connection (for the cases at hand) are curves whose development into the homogeneous model are straight lines. We refer to \cite{bryant} for the description of the Cartan geometry and path geometry corresponding to a scalar second order ODE. 
 
 Our initial definition of projective structures and Cartan projective connections are linked through the notion of normal Cartan projective connection. It is proven that to any projective structure on a manifold $M$, there is associated a unique normal Cartan projective connection, and conversely every normal Cartan projective connection is induced by a unique projective structure, see \cite{cartan1924, kobayashi1, kobayashi2}.
 
Meanwhile independently from Cartan, Whitehead \cite{whitehead} and Thomas \cite{thomas1, thomas2}, developed a different perspective on projective structures. Given the datum of a projective structure on a manifold $M$, Thomas and Whitehead build its so-called associated volume bundle $\widetilde{M}$. Granted that a T-W (Thomas-Whitehead) connection is a torsion-free linear connection $\widetilde{\nabla}$ on $\widetilde{M}$, verifying some appropriate properties, see \cite{thomas1, thomas2, whitehead, roberts}. Restricting to the particular family of 'normal' T-W connections \cite{roberts}, one proves that any projective structure on a manifold $M$ is induced by (corresponds to) a unique normal T-W connection on $\widetilde{M}$. For the holomorphic counterpart of the theory of Thomas and Whitehead, we refer to \cite{molzon}. 

The approaches of Cartan, and Thomas, Whitehead are tied together via the so-called theory of tractor calculus: connections and bundles, see \cite{eastwood, cap1}.

Together, and as a kind of synthesis, they led to the birth of the field of parabolic geometry \cite{cap2} and its modern developments, notably the BGG machinery \cite{cap2}.


On the other hand there is the twistorial approach as expounded by Hitchin \cite{hitchin1982}, Lebrun \cite{lebrun1}, see also \cite{luza}, \cite{hwang}, \cite{kamran}, where a (holomorphic) projective structure on a complex manifold $M$ corresponds (when $\dim M=2$) to an embedding of a rational curve in $M$ with normal bundle of degree $1$. But a similar result holds for complex manifolds of higher dimensions \cite{lebrun1}. This twistorial approach is strongly based on deformation techniques, as epitomized by the famous Kodaira deformation theorem \cite{kodaira}.

In this paper we investigate projective connections on surfaces immersed in projective $n$-space according to the behavior of the osculating planes of the integral curves of the projective connections. What happens is that the geometry of the differential equation defining the integral curves of the projective connection is controlled by the second osculating space of the surface. This space denoted below by $S(2,0)$ can be equal only to $\C P_2$, $\C P_3$, $\C P_4$, $\C P_5$. The case $S(2,0)=\C P_2$ corresponds to a plane surface; the case of $S(2,0)=\C P_3$ gives to two sub-cases, a non-developable surface of $\C P_3$ or a developable (non-planar) surface in $\C P_n$, $n\geqslant3$. The surfaces having $S(2,0)=\C P_4$ comprise also two sub-cases, namely surfaces with a conjugate net, and surfaces with a family of asymptotics. Finally the last remaining case is given by the surfaces having $S(2,0)=\C P_5$, which are called general surfaces. What we do in \secref{bompiani} is we take the different cases of $S(2,0)$ sequentially and show how one may obtain the full family of projective connections in each case. We succeed doing so in the significant cases of $S(2,0)=\C P_4$, $S(2,0)=\C P_5$ and in the case of the non-developable surfaces of $\C P_3$. This is done in the \subsecref{bomp1} and \subsecref{bomp3}, resp. \subsecref{bomp4}, resp. \subsecref{wilc}. And this is illustrated by the \theoref{thb2} and \theoref{thb3}, resp. \theoref{thb4}, resp. \theoref{wilc43}. Following this we in \secref{kas}, exploit a simple rational transformation \eqref{eq2} acting on the higher Cauchy data plane, which is simply the plane $\C^2(u^\prime,u^{\prime\prime})$ above a given point $(x,u)$ of $\C^2$, and deduce some consequences on projective connections. See for instance \lemref{lem1}, \propref{prop1}, \propref{elements}. Further in \secref{invkas} we determine the joint invariants of the pseudogroup $\mathscr P$, see equation \eqref{pro4}, acting on the the $n$-th Cartesian product, $n\geqslant4$, of the higher Cauchy data plane. We find the complete set of rational invariants, see \theoref{inv}, and give a geometric interpretation in \theoref{jointinv} of the invariants gotten in \theoref{inv}. This interpretation is arrived to by using some functorial isomorphism between the two data, see \theoref{theo1}. Finally in \subsecref{secexa} we describe the simplest of the geometric invariants described in \theoref{jointinv}.
\section{Geometrical interpretations of projective connections on some surfaces}
\label{bompiani}
In this \secref{bompiani} we investigate projective connections on holomorphic surfaces of $\C P_n$. Before doing so we introduce some notions that we shall need throughout \secref{bompiani}.
\begin{definition}
\label{cil1}
A regular parametric representation (r.p.r.) of an analytic subvariety of $\mathbf V$ of dimension $m$ of $\C P_n$, in the neighborhood of a smooth point $P\in\mathbf V$, is a triple $(\mathscr D,U,f)$, such that
\begin{enumerate}
\item $\mathbf U$ is an open neighborhood of $P$ in $\mathbf V$.
\item $\mathscr D$ is an open subset of $\C^h$, with $h\geqslant m$.
\item $f:\mathscr D\to \mathbf U$ is a holomorphic surjective map of rank $m$ at any point of $\mathscr D$.
\end{enumerate}
\end{definition}
\begin{definition}
\label{cil2}
Let $(\mathscr D,\mathbf U,f)$ be a regular parametric representation of $\mathbf V\subset \C P_n$. A lifting of $(\mathscr D,\mathbf U,f)$ in a system of homogeneous coordinates of $\C P_n$ is a holomorphic map $y:\mathscr D\to\C^{n+1}-\{0\}$ which makes commutative the following diagram
\[\begin{tikzcd}
  \mathscr D \arrow[r, "y"] \arrow[d, "f"]
    & \C^{n+1}\setminus\{0\} \arrow[d, "\psi"] \\
  \mathbf U \subset \mathbf V\arrow[r, "\subset"]
& |[rotate=0]| \C P_n \end{tikzcd}\]
\end{definition}
Let us fix a point $P$ of $\mathbf V$, and choose a r.p.r. $(\mathscr D,\mathbf U,f)$ of $\mathbf V$ in a neighborhood of $P$. We consider a lifting $y:\mathscr D\to\C^{n+1}-\{0\}$ with respect to a system of homogeneous coordinates; further let $u_0\in \mathscr D$ be a point such that $f(u_0)=P$, and $r$ a non negative integer.

Given a multiindex $I=(i_1,i_2,\ldots,i_h)$ we set $|I|:=\sum_{l=1}^hi_l$ and $y_I:=\dfrac{\partial^{|I|}y}{\partial u_1^{i_1}\ldots \partial u_h^{i_h}}$. We introduce the projective subspace $T(r,P,\mathbf V)\subset \C P_n$ generated by the points $<y_{I}(u_0)>$ for all multiindices such that $|I|\leqslant r$. Then one readily sees that $T(r,P,\mathbf V)$ does not depend on the lifting, the choice of the coordinate system and the choice of the r.p.r.
\begin{definition}
The space $O_{s,P,\mathbf V}:=T(s,P,\mathbf V)$ is called the $s$-th osculating space to $\mathbf V$ in $P$. In particular $T(1,P,\mathbf V)=:T(P,\mathbf V)$ is the tangent projective space to $\mathbf V$ at $P$ and $T(0,P,\mathbf V)=P$.
\end{definition}
\begin{definition}
\label{cil3}
Let us consider in $\C P_n$ an analytic variety $\mathbf V$ of dimension $m\leqslant n$, whose parametric vector equation (an r.p.r with $h=m$) is
$$y=y(u_1,\ldots,u_m),$$
in which $u_1,\ldots,u_m$ are the independent parameters. The coordinates $y$ are single-valued analytic functions of the parameters. The element $E_r$ at a point $P$ of a curve $\mathbf C$ immersed in $\C P_n$ ($0\leqslant r\leqslant n$) is defined to be the configuration composed of the point itself and all the osculating spaces, $O_{s,P,\mathbf C}$, $1\leqslant s\leqslant r$, of the curve at $P$. In particular the element $E_0$ at a point of a curve is the point itself, and the element $E_2$ at a point of a curve is the configuration composed of the point itself, the tangent line and the osculating plane of the curve at the point.

The space $S(k,r)$ at a point of a curve on a variety is defined to be the projective space of least dimensionality containing the osculating spaces $O_{k, P,\mathbf C}$, $k>0$ at the point $P$ of all the curves $\mathbf C$ that lie on the variety and pass through the point and have in common at the point the same element $E_r$, $r<k$. In particular $S(k,0)$, $k>0$, is the projective space of least dimensionality containing the osculating spaces $O_{k,P,\mathbf C}$ at the point of all the curves $\mathbf C$ that lie on the variety and pass through the point. It follows (see \cite[p.~390]{lane}) that
$$S(k,0):=\biggl\langle y,y_x,y_u,\ldots,\frac{\partial ^ky}{\partial x^k},\frac{\partial ^ky}{\partial x^{k-1}\partial u}\ldots,\frac{\partial ^ky}{\partial u^k}\biggl\rangle=T(k,P,\mathbf V),$$
when the variety $\mathbf V$ happens to be a surface.
\end{definition}
 \begin{definition}
 A pencil of projective subspaces of $\C P_n$ is a (projective) linear combination of 2 distinct projective subspaces of $\C P_n$ of the same dimension. Two pencils of subspaces $x_1 S_1+x_2 S_2 $ and $x_3S_3+x_4S_4$, with $\left[x_1:x_2\right]$ and $\left[x_3:x_4\right]$ varying in $\C P_1$, and such that $\dim S_1=\dim S_2$ and $\dim S_3=\dim S_4$, are projective equivalent if there exists $\left(\begin{array}{cc}a & b \\c & d\end{array}\right)\in PGL(2,\C)$ such that $\left[x_3:x_4\right]=\left[ax_1+bx_2:cx_1+dx_2\right]$.
 
 Given an analytic surface $\mathbf V\subset \C P_n$ and $P\in V$, the pencil of tangents to $V$ at $P$ is the set of lines $l$ of $\C P_n$ passing through $P$, and tangent $\mathbf V$ at $P$.
 \end{definition}
We now start our study of \eqref{eqpr11} on a surface $\mathbf S$. The interpretation of equation \eqref{eqpr11}: 
$$u^{\prime\prime}=A+Bu^{\prime}+Cu^{\prime2}+Du^{\prime3},\qquad u^\prime=\dfrac{du}{dx},\;u^{\prime\prime}=\dfrac{d^2u}{dx^2}$$
differs strikingly according to the groups with respect to which it is considered. Being an equation of the second order, its interpretation must depend at each point of the surface $\mathbf S$ only on the neighborhood of the second order at this point. The ambient space has nothing to do with equation \eqref{eqpr11} if this space is not determined by the neighborhood of the second order of one point. In fact in the projective geometric interpretation of \eqref{eqpr11}, the only essential distinction comes from the projective characterization of the neighborhood of second order. Thus we have to distinguish the following cases
\begin{equation}
\label{eqb1}
S(2,0)=\begin{cases}
      \C P_2& \text{ plane surface}, \\
     \C P_3 & \text{developable surface (non planar); non-developable surface in $\C P_3$}\\
     \C P_4& \text{surface with a conjugate net; surfaces with a family asymptotic curves}\\
     \C P_5&\text{general surface}.
\end{cases}
\end{equation}
In each of these spaces $S(2,0)$ we suppose that projective geometry holds: it is unnecessary that a projective geometry holds in the ambient space. What we are constructing is a projective geometry of the surface considered independently of the ambient space, or, better, a geometry of the differential equations represented by the surface. In this sense, to interpret geometrically \eqref{eqpr11} means to give a construction of the element $E_2$ satisfying \eqref{eqpr11}, when an element $E_1$ (point and tangent) is given; the construction must be made in the space $S(2,0)$ at the point.
\begin{remark}
Let $y(x,u)$ be the parametric vector equation of a surface $\mathbf S\subset\C P_n$ then $S(2,0)=\C P_2$ if and only if the surface $\mathbf S$ is a plane and $S(2,0)=\C P_3$ if and only if $\mathbf S$ is developable surface in $\C P_n$ or a non-developable surface immersed in $\C P_3$ (see \cite[\S.8 Ch. IV]{lane}).
\end{remark}
\subsection{Projective connections on non-developable surfaces in $\C P_3$}
\label{wilc}
For the notions of projective differential geometry required in this \subsecref{wilc}, we use \cite[Ch. V]{lane}. Let $\mathbf S$ be an analytic surface of $\C P_3$. Let the homogeneous coordinates $y_1,\ldots,y_4$ of a point $y$ in $\C P_3$ be given as single-valued analytic functions of two independent variables $x$, $u$, on a certain domain $\mathscr D$ by equations of the form
$$y_i=y_i(x,u)\qquad(i=1,2,3,4).$$ 
In vector notation these equations can be written as one single equation
\begin{equation}
\label{eqpr1}
y=y(x,u).
\end{equation}
The locus of the point $y$ when $x$, $u$ vary over $\mathscr D$ is, by definition an analytic surface. If we suppose that the coordinates $y_i(x,u)$ are not solutions of any linear homogeneous first-order partial differential equation of the form
\begin{equation}
\label{eqpr2}
\mathcal Ay_x+\mathcal By_u+\mathcal Cy=0
\end{equation}
where subscripts indicate partial differentiation and the coefficients $\mathcal A,\mathcal B,\mathcal C$ are scalar functions of $x,u$ which are not all zero, then the locus of the point $y$, as $x$, $u$ vary over the domain $\mathscr D$, is by definition, a proper analytic surface.
\begin{remark}
This amounts to requiring that the surface $\mathbf S$ does not degenerate to a point or to a curve. 
\end{remark}
The (embedded) tangent plane at a point $y(x,u)$ is the set of points $Y=(Y_1,\ldots,Y_4)$, where $Y$ is a variable point of the plane such that
\begin{equation}
\label{eqpr3}
\det(Y,y,y_x,y_u)=0.
\end{equation}
Let us further consider on the surface $\mathbf S$ a curve $\mathbf C$ whose curvilinear parametric equations are
\begin{equation}
\label{eqpr4}
x=x(t);\quad u=u(t).
\end{equation}
The osculating plane at the point $y$ of the curve $\mathbf C$ is determined by the points $y$, $y^\prime$, $y^{\prime\prime}$, the last two of which are given by the formula
\begin{equation}
\label{eqpr5}
\begin{split}
&y^\prime=y_xx^\prime+y_uu^\prime\\
&y^{\prime\prime}=y_{xx}x^{\prime2}+2y_{xu}x^\prime u^\prime+y_{uu}u^{\prime2}+y_xx^{\prime\prime}+y_uu^{\prime\prime},\\
&y^\prime:=\dfrac{dy}{dt},\,y^{\prime\prime}:=\dfrac{d^2y}{dt^2},\,u^\prime=\dfrac{du}{dt},\,u^{\prime\prime}=\dfrac{d^2u}{dt^2}.
\end{split}
\end{equation}
Hence the equation of the osculating plane takes the form
\begin{equation}
\label{eqpr6}
\det(Y,y,y^\prime,y^{\prime\prime})=0
\end{equation}
in which the point $Y$ is a variable point in the plane.

The curve $\mathbf C$ is an asymptotic curve on the surface $\mathbf S$ provided its (embedded) tangent plane and its osculating plane coincide. This means that the point $y^{\prime\prime}$ which already belongs to the osculating plane \eqref{eqpr6}, is also in the (embedded) tangent plane \eqref{eqpr3}. Thus this can be expressed as
$$\det(y^{\prime\prime},y,y_x,y_u)=0.$$
Substituting in this equation the expressions given for $y^{\prime\prime}$ by the second formula \eqref{eqpr5}, using some well-known properties of determinants, and multiplying by $dt^2$, we obtain the curvilinear differential equation of the asymptotic curves in the following form
\begin{equation}
\label{eqpr7}
Ldx^2+2Mdxdu+Ndu^2=0
\end{equation} 
where the coefficients $L$, $M$, $N$ are the following determinants  
\begin{equation}
\label{eqpr8}
\begin{split}
&L=\det(y_{xx},y,y_x,y_u) \\
&M=\det(y_{xu},y,y_x,y_u)\\
&N=\det(y_{uu},y,y_x,y_u).
\end{split}
\end{equation}
A surface $\mathbf S\subset\C P_3$ is called developable if
$$LN-M^2\equiv 0.$$
For a non-developable surface $\mathbf S\subset \C P_3$ ($LN-M^2\not=0$).

This condition means precisely that the asymptotic curves form a $2$-web. This net is the parametric net ($dxdu=0$) if and only if $L=0$, $N=0$, $M\not=0$, i.e., if and only if, the points $y_{xx}$ and $y_{uu}$ are linearly dependent on the points $y$, $y_x$, $y_u$, while the points $y_{xu}$, $y$, $y_x$, $y_u$ are linearly independent. Therefore the necessary and sufficient conditions that the asymptotic net be the parametric net $dxdu=0$ on a non-developable surface \eqref{eqpr1} in $\C P_3$ are that the surface be an integral surface of a system of two differential equations (this means exactly that the coordinates $y$ of a variable point of the surface satisfy the system \eqref{eqpr9}) of the form
\begin{equation}
\label{eqpr9}
\begin{split}
&y_{xx}+cy+2ay_x+2by_u=0\\
&y_{uu}+c_1y+2a_1y_x+2b_1y_u=0
\end{split}
\end{equation}
whose coefficients are scalar functions of $x$, $u$, and further that the surface be not an integral surface of any equation of the form
\begin{equation}
\label{eqpr10}
\mathcal Ay_{xu}+\mathcal By_x+\mathcal Cy_u+\mathcal Dy=0,
\end{equation}
whose coefficients are scalar functions of $x$, $u$, which are not all zero. Furthermore the coefficients of \eqref{eqpr9} are not arbitrary, since they must satisfy some integrability conditions, which in our case all follow from $(y_{xx})_{uu}=(y_{uu})_{xx}$.

Let us now come back to the main object of our study in this subsection. Let $\mathbf S$ be a non-developable surface given in vector notation by \eqref{eqpr1}. We suppose that its parametric net is formed of asymptotic curves \eqref{eqpr7}. 
A curve $\mathbf C$ on $\mathbf S$ represented by the curvilinear equation
$$u=u(x)$$
on the surface $\mathbf S$: $y=y(x,u)$, is called a geodesic curve, provided the function $u(x)$ satisfies an ordinary second order differential equation of the form
\begin{equation}
\label{eqpr11}
u^{\prime\prime}=A+Bu^{\prime}+Cu^{\prime2}+Du^{\prime3},\qquad u^\prime=\dfrac{du}{dx},\;u^{\prime\prime}=\dfrac{d^2u}{dx^2}
\end{equation} 
where the coefficients $A$, $B$, $C$, $D$ are functions of $x,u$. 
For $\mathbf C$ an integral curve of $\eqref{eqpr11}$ on the surface $\mathbf S$ and $P\in \mathbf C$, the coordinates of $P$ may be regarded as functions of $x$, and the equations \eqref{eqpr5} transform into
\begin{equation}
\label{eqpr14}
\begin{split}
&y^\prime=y_x+y_uu^\prime\\
&y^{\prime\prime}=y_{xx}+2y_{xu} u^\prime+y_{uu}u^{\prime2}+y_uu^{\prime\prime},\quad y^\prime:=\dfrac{dy}{dx},\quad y^{\prime\prime}:=\dfrac{d^2y}{dx^2},\quad u^\prime=\dfrac{du}{dx},\quad u^{\prime\prime}=\dfrac{d^2u}{dx^2}.
\end{split}
\end{equation}
Making use of \eqref{eqpr9} we get
\begin{equation}
\label{eqpr15}
y^{\prime\prime}=-(c+c_1u^{\prime2})y-2(a+a_1u^{\prime2})y_x+(-2(b+b_1u^{\prime2})+u^{\prime\prime})y_u+2u^\prime y_{xu}.
\end{equation}
Let us introduce the local coordinates system $y$, $y_x$, $y_u$, $y_{xu}$ in $\C P_3$, i.e. a $2$-parameters projective frame on $\C P_3$, a projective frame for all $(x,u)\in\mathscr D$. This amounts to the fact that the local coordinates shall be proportional to $\alpha_1$, $\alpha_2$, $\alpha_3$, $\alpha_4$, if that point is defined by an expression of the form
$$\alpha_1y+\alpha_2y_x+\alpha_3y_u+\alpha_4y_{xu}.$$
Moreover with this choice of a projective frame at $P$, one defines the asymptotic tangent at $P$ as the projective lines associated to the vector spaces $\langle y,y_u\rangle$ and $\langle y,y_x\rangle$. We can also consider equation \eqref{eqpr11} in the form
\begin{equation}
\label{eqpr35678}
dxd^2u-dud^2x=Adx^3+Bdx^2du+Cdxdu^2+Ddu^3,
\end{equation}
where no parametrization is preferred.

A point of the osculating plane to $\mathbf C$ (see equations \eqref{eqpr5}, \eqref{eqpr14}, \eqref{eqpr15}) has local coordinates 
\begin{equation}
\label{eqpr35679}
\begin{split}
\chi_1 \qquad\text{(arbitrary)},\\
 \chi_2=(d^2x-2(adx^2+a_1du^2))+\rho dx, \\
 \chi_3=(d^2u-2(bdx^2+b_1du^2))+\rho du,\\
  \chi_4=2dxdu,
\end{split}
\end{equation}
where $\rho$ is an arbitrary parameter. The elimination of $dx$, $du$, $d^2u$, $d^2x$, $\rho$, using \eqref{eqpr35678} and \eqref{eqpr35679} gives for the envelope of the osculating planes of the geodesics on the surface \eqref{eqpr1} that pass through $P$, as the cone whose equation is given by
\begin{equation}
\label{eqpr18}
2\chi_2\chi_3\chi_4+(A+2a_1)\chi_2^3-(3B-2b_1)\chi_2^2\chi_3+(3C+2a)\chi_2\chi_3^2-(D-2b)\chi_3^3=0.
\end{equation}

This envelope defines generically a cubic cone. If $D-2b$ or $A+2a_1$ is zero, but not both, it degenerates to a quadric cone. It degenerates to a line if and only if 
$$D-2b=A+2a_1=0.$$ 
Therefore in summary we have
\begin{theorem}
\label{wilc43}
Consider any family of geodesics on a non-developable surface $\mathbf S$. Those which pass through a given point $P$, of $\mathbf S$, form a one-parameter family. Their osculating planes, at $P$, envelop generically a cubic cone, which intersects the (embedded) tangent plane $\Pi_P$ of $\mathbf S$ at $P$ along its asymptotic tangents. Conversely given any system of cubic cones, associated with the given surface in this manner, one may, using \eqref{eqpr18} solve for the four functions, $A$, $B$, $C$, $D$, of $x$ and $u$. Therefore this cubic cone may be regarded by \eqref{eqpr18}, as corresponding to some equation of form \eqref{eqpr11} or \eqref{eqpr35678}. \end{theorem}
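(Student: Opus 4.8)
The plan is to split the statement into four independent assertions and to harvest most of the work from the computation carried out just before the theorem. First, the geodesics of \eqref{eqpr11} through a fixed $P=(x_0,u_0)$ form a one-parameter family: \eqref{eqpr11} is a holomorphic second-order ODE, so by the Cauchy--Kovalevskaya theorem there is through $P$ a unique germ of integral curve $u=u(x)$ for each prescribed initial slope $u'(x_0)=p\in\C$, and rewriting the equation in the symmetric form \eqref{eqpr35678} adjoins the one missing direction (the curve $x=x_0$, ``$p=\infty$''); thus the integral curves through $P$ are parametrised by $[dx:du]\in\C P_1$, i.e.\ by the pencil of tangents to $\mathbf S$ at $P$.

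Next, their osculating planes at $P$ envelop the cubic cone \eqref{eqpr18}. Because $\mathbf S$ is non-developable with asymptotic parametric net, the canonical system \eqref{eqpr9} holds and $y,y_x,y_u,y_{xu}$ is a genuine moving projective frame on $\mathscr D$. Along an integral curve through $P$, formula \eqref{eqpr15} expresses $y''$ in this frame, so the osculating plane $\langle y,y',y''\rangle$ has, for its running point, the coordinates \eqref{eqpr35679}, with $\chi_1$ free and $dx,du,d^2x,d^2u,\rho$ subject only to \eqref{eqpr35678}. Eliminating these five quantities is the technical core: one combines the $\chi_i$ so as to cancel $\rho$ (the combination $dx\,\chi_3-du\,\chi_2$ is $\rho$-free), substitutes $dx\,d^2u-du\,d^2x$ from \eqref{eqpr35678}, and clears the remaining first differentials using $\chi_4=2\,dx\,du$; the outcome is the single homogeneous cubic relation \eqref{eqpr18} in $\chi_2,\chi_3,\chi_4$. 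The absence of $\chi_1$ shows this is a cone with vertex $P=\langle y\rangle$, and by construction it is the envelope of the osculating planes.

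Generically \eqref{eqpr18} is an irreducible cubic form, so the envelope $\Gamma_P$ is a cubic cone. Restricting to $\Pi_P=\{\chi_4=0\}$, the two asymptotic tangents $\langle y,y_x\rangle$ and $\langle y,y_u\rangle$ appear there because, as the tangent direction of a geodesic through $P$ tends to an asymptotic direction, its osculating plane tends to $\Pi_P$ itself (an asymptotic curve is, by definition, one whose osculating plane coincides with the tangent plane), so the corresponding characteristic line --- a ruling of $\Gamma_P$ --- tends to, and hence lies along, that asymptotic tangent. The degenerations are read directly off \eqref{eqpr18}: if exactly one of $A+2a_1$ and $D-2b$ vanishes the cubic form splits off a linear factor, i.e.\ a plane through the vertex, and $\Gamma_P$ drops to a quadric cone, while if both vanish the osculating planes share a common line, i.e.\ $\Gamma_P$ degenerates to a line.

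Finally, for the converse, the functions $a,b,a_1,b_1$ are fixed data of $\mathbf S$ (the coefficients of \eqref{eqpr9}), so prescribing a cone of the shape \eqref{eqpr18} at each point $(x,u)$ prescribes the four quantities $A+2a_1$, $3B-2b_1$, $3C+2a$, $D-2b$; this is a (triangular) linear system, uniquely solvable for $A,B,C,D$ as holomorphic functions of $x,u$, and the resulting equation \eqref{eqpr11} --- equivalently \eqref{eqpr35678} --- has exactly the prescribed family of cones as the envelopes of the osculating planes of its geodesics. The step I expect to be the main obstacle is the elimination in the second paragraph: carrying it through cleanly while respecting the weighting of the differentials $dx,du$ (weight one) and $d^2x,d^2u,\rho\,dx$ (weight two); a secondary delicate point is turning the limiting identification of the asymptotic tangents in the third paragraph into a rigorous argument rather than a heuristic one.
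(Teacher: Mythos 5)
Your decomposition and overall route coincide with the paper's: \theoref{wilc43} is stated there as a summary of the computation running from \eqref{eqpr14} through \eqref{eqpr18}, and your first, third and fourth paragraphs supply arguments (the $\C P_1$ of initial directions, the limiting identification of the asymptotic tangents as characteristic lines, the triangular linear system recovering $A,B,C,D$ from the cone coefficients) for assertions the paper leaves implicit; those parts are sound, modulo the caveat below.

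The genuine gap sits exactly where you predicted, in the elimination, and it is conceptual rather than computational. Count parameters: after using the $\rho$-free combination $dx\,\chi_3-du\,\chi_2$ and substituting \eqref{eqpr35678}, you are left with two equations, $dx\,\chi_3-du\,\chi_2=\Phi(dx,du)$ (with $\Phi$ the binary cubic built from $A,B,C,D,a,b,a_1,b_1$) and $\chi_4=2\,dx\,du$, in the two remaining unknowns $dx,du$. For generic $(\chi_2,\chi_3,\chi_4)$ this system is solvable --- which merely expresses that the union of the osculating planes fills out $\C P_3$ --- so eliminating $dx,du$ from it yields no relation among the $\chi_i$ whatsoever. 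The envelope is cut out only after adjoining the stationarity condition, i.e.\ requiring the incidence relation, viewed as an equation in the direction $[dx:du]$, to have a multiple root (equivalently, adjoining its derivative with respect to the direction before eliminating); your recipe omits this extra equation, so ``the outcome is \eqref{eqpr18}'' is not delivered by the procedure you describe. A warning sign you should have caught: taking \eqref{eqpr18} at face value as a point equation and setting $\chi_4=0$ leaves the full binary cubic $(A+2a_1)\chi_2^3-\cdots-(D-2b)\chi_3^3$, which does not vanish on the asymptotic tangents $\chi_2=\chi_4=0$ and $\chi_3=\chi_4=0$ unless $A+2a_1=D-2b=0$; so your (correct) limiting argument in the third paragraph is in tension with your own reading of \eqref{eqpr18}, and the cubic is more naturally the class (tangential) equation of the envelope --- the condition on a plane through $P$ to belong to the family, obtained by eliminating the single direction parameter from the plane coordinates --- a distinction your proof needs to confront before the ``cubic cone'' and its trace on $\Pi_P$ can both be justified.
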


\subsection{Interpretation of projective connections on surfaces with a conjugate net}
\label{bomp1}
We begin to study, the interpretation of \eqref{eqpr1} on a surface with a conjugate net, that is, in relation to an equation of Laplace
\begin{equation}
\label{eqb2}
y_{xu}+ay_{x}+by_u+cy=0.
\end{equation}
This case is in a certain sense the simplest of all. For more on equations of type \eqref{eqb2} we refer to \cite[Ch. IV, \S. 7]{lane}.

Let us give at each point $P$ of the surface $\mathbf S$ a plane $\omega_P$ in the space $S(2,0)=\C P_4$ at $P$, not intersecting, except at $P$, the (embedded) tangent plane $\Pi_P$ at $P$ (see \cite[p.~3, Th. VI.]{sempleroth}). We then have a congruence $\{\omega_P\}$ i.e. a $2$-parameter family of planes attached to $\mathbf S$. We have the following
\begin{theorem}
\label{thb1}
The congruence $\{\omega_P\}$ determines on $\mathbf S$ a $2$-parameter system of curves having the property that the $1$-parameter system of osculating planes to these curves at $P$ intersect the plane $\omega_P$ in lines.
\end{theorem}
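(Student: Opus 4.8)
The plan is to translate the incidence condition of the statement into a single algebraic relation on the second derivative $u''=d^2u/dx^2$ of a curve $u=u(x)$ lying on $\mathbf S$, and then to recognise that relation as an equation of the form \eqref{eqpr11}, whose integral curves constitute the sought two-parameter system.

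First I would fix coordinates adapted to the conjugate net. Since $\mathbf S$ carries a conjugate net, its vector equation $y=y(x,u)$ satisfies the Laplace equation \eqref{eqb2}, so $y_{xu}\in\langle y,y_x,y_u\rangle$ and hence $S(2,0)=\langle y,y_x,y_u,y_{xx},y_{uu}\rangle$ is five-dimensional, with $\Pi_P=\langle y,y_x,y_u\rangle$. Because $\omega_P$ is a plane of $S(2,0)=\C P_4$ passing through $P=\langle y\rangle$ and meeting $\Pi_P$ only there, its linear hull together with $\langle y,y_x,y_u\rangle$ fills $S(2,0)$; projecting modulo $\langle y,y_x,y_u\rangle$ therefore carries any two generators of $\omega_P$ to a basis of $\langle y_{xx},y_{uu}\rangle$, so after replacing the generators by suitable combinations I may assume
\[
\omega_P=\bigl\langle\, y,\ y_{xx}+\alpha_1 y_x+\beta_1 y_u+\gamma_1 y,\ y_{uu}+\alpha_2 y_x+\beta_2 y_u+\gamma_2 y\,\bigr\rangle ,
\]
where $\alpha_i,\beta_i,\gamma_i$ are holomorphic functions of $(x,u)$ on $\mathscr D$ determined by the congruence $\{\omega_P\}$.

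Next, for a curve $u=u(x)$ on $\mathbf S$ passing through $P$ with $u'=p$ and $u''=q$ there, equations \eqref{eqpr14} together with \eqref{eqb2} express its osculating plane at $P$ as the span of $y$, of $y_x+py_u$, and of $y_{xx}+p^{2}y_{uu}-2ap\,y_x+(q-2bp)y_u-2cp\,y$. Now two planes of $\C P_4$ sharing a point meet along a line precisely when their linear hulls span at most a hyperplane, and these two planes cannot coincide, since the tangent vector $y_x+py_u$ lies in the osculating plane yet, belonging to $\Pi_P\setminus\{P\}$, not in $\omega_P$. So the required condition is the vanishing of the $5\times5$ determinant built from $y$, $y_x+py_u$, $y''$ and the two generators of $\omega_P$, taken in the basis $\{y,y_x,y_u,y_{xx},y_{uu}\}$. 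An elementary cofactor expansion, streamlined by the adapted form of $\omega_P$, evaluates this determinant (up to sign) as $q-\beta_1-(2b-\alpha_1)p-(\beta_2-2a)p^{2}+\alpha_2 p^{3}$, so the family of curves distinguished by the congruence is exactly the set of integral curves of
\[
u''=A+Bu'+Cu'^{2}+Du'^{3},\qquad A=\beta_1,\ B=2b-\alpha_1,\ C=\beta_2-2a,\ D=-\alpha_2 ,
\]
an equation of the type \eqref{eqpr11} (equivalently \eqref{eqpr35678}) with holomorphic coefficients. Its solutions form a two-parameter system on $\mathbf S$, through each point $P$ of which passes a one-parameter subfamily indexed by the tangent direction $p$, and by the very computation above each member of that subfamily has osculating plane at $P$ cutting $\omega_P$ along a line.

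The one delicate point, and the natural place for an error to hide, is the determinantal step: one must check that the $5\times5$ determinant really is of degree one in $u''$ with non-vanishing leading coefficient, so that it defines a genuine second-order equation solved for $u''$ rather than a degenerate incidence, and that its dependence on $u'$ is at most cubic, so that the equation lies in the projective class \eqref{eqpr11}. Both facts fall out cleanly from the normalisation of $\omega_P$ in the first step — the coefficient of $u''$ emerging equal to $1$ and the $u'$-dependence being visibly of degree three — and everything else is linear algebra inside the fixed five-dimensional space $S(2,0)$; holomorphy of $A,B,C,D$ on $\mathscr D$ is then inherited from the analyticity of the congruence $\{\omega_P\}$.
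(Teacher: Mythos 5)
Your argument is correct, but it takes a genuinely different route from the paper's own proof of Theorem~\ref{thb1}. The paper's proof is synthetic and frame-free: given an element $E_1=(P,t)$, the space $S(2,1)$ is a $\C P_3$ containing the osculating planes of all curves through $P$ with tangent $t$; by dimension count in $\C P_4$ it cuts the plane $\omega_P$ in a line $l$, and the plane $\langle l,t\rangle$ is then the osculating plane of an element $E_2$ --- so each $E_1$ determines an $E_2$, hence a $2$-parameter system. What you do instead --- normalizing $\omega_P$ by two generators congruent to $y_{xx}$ and $y_{uu}$ modulo $\langle y,y_x,y_u\rangle$, expressing the incidence of the osculating plane with $\omega_P$ as a $5\times5$ determinant, and reading off an equation of type \eqref{eqpr11} --- is essentially the content of the paper's Theorem~\ref{thb2}, which the paper carries out with the Pl\"ucker coordinates $p_{ij}$ of the lines at infinity rather than with your adapted frame. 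Your version buys two things: it makes explicit that the incidence relation is affine-linear in $u''$ with unit leading coefficient (so the map $E_1\mapsto E_2$ is single-valued and the system is a genuine second-order equation solved for $u''$), and it delivers the coefficients $A,B,C,D$ at no extra cost; the paper's synthetic proof is shorter and defers the analytic representation to Theorem~\ref{thb2}. One small remark: your $B=2b-\alpha_1$ corresponds to $B=2b-p_{14}$ in the paper's notation, whereas \eqref{eqb8} reads $B=-2b-p_{14}$; re-expanding \eqref{eqb3} shows the coefficient of $dx^2du$ in \eqref{eqb6} should be $p_{14}-2bp_{34}$, so your sign appears to be the correct one and the $b$-term of \eqref{eqb6}--\eqref{eqb8} carries a typo (compare the $a$-term, where the paper's $C=-2a-p_{23}$ agrees with your $C=\beta_2-2a$).
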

\begin{proof}
We call such a system a plane system of curves. In fact, given a tangent $t$ at $P$ (which together with $P$ determines an $E_1$), this tangent determines a space $\C P_3=:S(2,1)$ which contains the osculating planes to all the elements $E_2$ having the given $E_1$ in common. This $\C P_3$ cuts $\omega_P$ in a line $l$ (see \cite[p. 3, Th. VI]{sempleroth}); furthermore one sees that the lines $l$, $t$ determine a plane of an element $E_2$, (as it belongs to the $\C P_3=S(2,1)$). Thus we see that given an element $E_1$, an element $E_2$ having the required property is determined. Hence we obtain a $2$-parameter system of curves having the required property.
\end{proof}
 Let us now prove the following
 \begin{theorem}
\label{thb2}
On a surface $\mathbf S$ with a conjugate net, every plane system is represented by a differential equation of the type \eqref{eqb7} (or the equivalent type \eqref{eqpr11}) where the coefficients $A(x,u)$, $B(x,u)$, $C(x,u)$, $D(x,u)$ are determined by the congruence of planes $\{\omega_P\}$ and may coincide with any four arbitrary given functions by a proper choice of the congruence. Conversely, every equation of the type \eqref{eqpr11}, whatever may be the coefficients $A$, $B$, $C$, $D$, determines at each point of the surface a plane and thus a congruence $\{\omega_P\}$ such that the integral curves of \eqref{eqpr11} are the curves of the plane system determined by $\{\omega_P\}$.
\end{theorem}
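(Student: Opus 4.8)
The plan is to reduce the statement to an explicit linear‑algebra computation in the projective frame $\{y,y_x,y_u,y_{xx},y_{uu}\}$ of $S(2,0)$. First I would record that, the net being conjugate, the Laplace equation \eqref{eqb2} expresses $y_{xu}$ as a combination of $y,y_x,y_u$, so that
$$S(2,0)=\langle\, y,\;y_x,\;y_u,\;y_{xx},\;y_{uu}\,\rangle=\C P_4,\qquad \Pi_P=\langle\, y,\;y_x,\;y_u\,\rangle .$$
Then I would put the congruence into normal form: since $\omega_P$ meets $\Pi_P$ only in $P$, the vectors $y,y_x,y_u$ together with a basis of $\omega_P$ span $S(2,0)$, and a change of basis (followed by subtracting off $y$‑components) shows that there are unique holomorphic functions $\beta,\gamma,\beta^\prime,\gamma^\prime$ of $x,u$ with
$$\omega_P=\bigl\langle\, y,\;\; y_{xx}+\beta y_x+\gamma y_u,\;\; y_{uu}+\beta^\prime y_x+\gamma^\prime y_u\,\bigr\rangle .$$

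Next, for a fixed point $P$ and a fixed tangent direction, that is, a value of $u^\prime$ (giving the tangent line $t=\langle\, y,\;y_x+u^\prime y_u\,\rangle$ and an element $E_1$), I would identify the space $\C P_3=S(2,1)$ of \theoref{thb1} explicitly. Using \eqref{eqpr14} and \eqref{eqb2} one gets, along any curve $u=u(x)$ through $P$,
$$y^{\prime\prime}=-2cu^\prime\,y-2au^\prime\,y_x+(u^{\prime\prime}-2bu^\prime)\,y_u+y_{xx}+u^{\prime2}y_{uu},$$
so letting $u^{\prime\prime}$ vary shows $S(2,1)=\langle\, y,\;y_x,\;y_u,\;y_{xx}+u^{\prime2}y_{uu}\,\rangle$. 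Its (transverse) intersection with $\omega_P$ is the line
$$l=\bigl\langle\, y,\;\bigl(y_{xx}+\beta y_x+\gamma y_u\bigr)+u^{\prime2}\bigl(y_{uu}+\beta^\prime y_x+\gamma^\prime y_u\bigr)\,\bigr\rangle ,$$
and, by the proof of \theoref{thb1}, the osculating plane of the element $E_2$ of the plane system lying over $E_1$ is $\langle t,l\rangle$.

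Finally I would impose that an integral curve $u=u(x)$ of the equation sought actually be a curve of the plane system, i.e. that its osculating plane $\langle\, y,\;y^\prime,\;y^{\prime\prime}\,\rangle=\langle\, y,\;y_x+u^\prime y_u,\;y^{\prime\prime}\,\rangle$ equal $\langle t,l\rangle$. In the basis $\{y,y_x,y_u,y_{xx},y_{uu}\}$ both planes already share the $y_{xx}$‑ and $y_{uu}$‑components, so the condition reduces to requiring that $y^{\prime\prime}$ minus the second generator of $l$ lie in $\langle\, y,\;y_x+u^\prime y_u\,\rangle$; matching the $y_x$‑ and $y_u$‑components then forces exactly
$$u^{\prime\prime}=\gamma+(2b-\beta)u^\prime+(\gamma^\prime-2a)u^{\prime2}-\beta^\prime u^{\prime3},$$
which has the cubic‑in‑$u^\prime$ shape \eqref{eqpr11}, with $A=\gamma$, $B=2b-\beta$, $C=\gamma^\prime-2a$, $D=-\beta^\prime$. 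Since $(\beta,\gamma,\beta^\prime,\gamma^\prime)\mapsto(A,B,C,D)$ is an invertible affine substitution ($\gamma=A$, $\beta=2b-B$, $\gamma^\prime=C+2a$, $\beta^\prime=-D$), the coefficients $A,B,C,D$ are determined by the congruence and attain every quadruple of holomorphic functions; conversely, given $A,B,C,D$ these same formulas define a congruence $\{\omega_P\}$ whose plane system has precisely the integral curves of \eqref{eqpr11}. That the two $2$‑parameter families — curves of the plane system and solutions of the equation — coincide, and not merely that one contains the other, is automatic, since the assignment $E_1\mapsto E_2$ is exactly the geometric content of a second‑order equation.

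The step I expect to be the main obstacle is this last matching of osculating planes: one must carry out the elimination in the right frame and verify that the resulting equation is exactly of the form \eqref{eqpr11} — a polynomial of degree three in $u^\prime$ — and that the induced map onto $(A,B,C,D)$ is a bijection onto all quadruples, which is what makes the assertion ``arbitrary four functions'' and its converse hold simultaneously. The remaining ingredients (the normal form for $\omega_P$, the transversality of $S(2,1)\cap\omega_P$, and the explicit description of $S(2,1)$) are routine once \theoref{thb1} and the Laplace equation \eqref{eqb2} are in hand.
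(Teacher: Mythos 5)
Your proof is correct and follows essentially the same route as the paper: both reduce the incidence condition ``osculating plane meets $\omega_P$ in a line'' to a coefficient-matching computation in the frame $y,y_x,y_u,y_{xx},y_{uu}$ after eliminating $y_{xu}$ via the Laplace equation, the only difference being that you parametrize $\omega_P$ by the graph normal form $\langle y,\,y_{xx}+\beta y_x+\gamma y_u,\,y_{uu}+\beta^\prime y_x+\gamma^\prime y_u\rangle$ (which makes the converse direction immediate) where the paper uses the determinantal condition \eqref{eqb3} and Pl\"ucker coordinates, reconstructing $p_{12}$ from the quadratic Pl\"ucker relation. (Your $B=2b-\beta$ differs from the paper's $B=-2b-p_{14}$ in \eqref{eqb8} only in the sign of $2b$; this traces to a sign slip in passing from $\tau_{12}$ to \eqref{eqb6} in the paper, and in any case does not affect the invertibility of the correspondence between congruences and quadruples $(A,B,C,D)$.)
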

\begin{proof}
In order to get an analytic representation of these curves let us introduce a system of local coordinates $(\alpha_0,\alpha_1,\alpha_2,\alpha_3,\alpha_4)$ for the point
$$\alpha_0 y+\alpha_1 y_x+\alpha_2y_u+\alpha_3 y_{xx}+\alpha_4y_{uu}$$
belonging to $S(2,0)=\C P_4$ at $P$. Let us call the $\C P_3$ (with coordinates $y_x,y_u,y_{xx},y_{uu})$, or $\alpha_0=0$, the space at infinity of $S(2,0)$ (this is, of course, only a name for the sake of brevity). We can determine $\omega_P$ by the point $P$ and two other points of the space $\C P_3$ at infinity, $(\alpha_1,\alpha_2,\alpha_3,\alpha_4)$, $(\beta_1,\beta_2,\beta_3,\beta_4)$. The osculating plane of a curve $\mathbf C$ of $\mathbf S$ at $P$ is determined (see \eqref{eqpr14}) by $P$ and the two points $y_xdx+y_udu$ and 
$$y_{xx}dx^2+y_{uu}du^2+(d^2x-2adx du)y_x+(d^2u-2bdxdu)y_u$$
whose local coordinates in the $\C P_3$ at infinity are $(dx,du,0,0)$ and 
$$(d^2x-2adxdu,d^2u-2bdxdu,dx^2,du^2).$$
The fact that the two planes $\omega_P$ and $O_{2,\mathbf C,P}$ (osculating plane at $P$) intersect in a line (or that their lines at infinity intersect in a point: recall that $P$ belongs to both planes) is expressed by (see \cite[p. 235, \S. 1.1.]{sempleroth}) the vanishing of the following determinant
\begin{equation}
\label{eqb3}
\left|\begin{array}{cccc}dx & du& 0 & 0 \\d^2x-2adxdu & d^2u-2bdxdu & dx^2 & du^2 \\\alpha_1 & \alpha_2 & \alpha_3 & \alpha_3 \\\beta_1 & \beta_2 & \beta_3 & \beta_4\end{array}\right|=0
\end{equation} 
Using the Plücker coordinates of the two lines at infinity (or of the two planes through $P$),
\begin{equation}
\label{eqb4}
p_{ij}=\alpha_i\beta_j-\alpha_j\beta_i,\qquad(ij=12,13,14,23,42,34)
\end{equation} 
and
\begin{equation}
\label{eqb5}
\begin{split}
&\tau_{12}=dxd^2u-dud^2x+2(adu-bdx)dxdu\\
&\tau_{13}=dx^3;\quad\tau_{14}=dxdu^2;\quad\tau_{23}=dx^2du\\
&\tau_{42}=-du^3;\quad\tau_{34}=0
\end{split}
\end{equation}
the preceding condition \eqref{eqb3} may be written in the form
$$p_{34}\tau_{12}+p_{42}\tau_{13}+p_{23}\tau_{14}+p_{14}\tau_{23}+p_{13}\tau_{42}=0$$
or
\begin{equation}
\label{eqb6}
p_{34}(dxd^2u-dud^2x)+p_{42}dx^3-p_{13}du^3+(p_{23}+2ap_{34})dxdu^2+(p_{14}+2bp_{34})dx^2du=0.
\end{equation}
In this equation \eqref{eqb6}, $p_{34}$ is different from zero as this is the condition which says that the planes $\omega_P$, $\Pi_P$ do not intersect in a line, or equivalently that their lines at infinity do not intersect in a point. Indeed we remind that the (embedded) tangent plane $\Pi_P$ is the projective $2$-plane generated by the points $y$, $y_x$, $y_{u}$ then by \cite[p. 235, \S. 1.1.]{sempleroth}, the lines at infinity of $\Pi_P$ and $\omega_P$ do not intersect if and only if the following determinant does not vanish
\begin{equation}
\label{eqhjlhmgersm}
\left|\begin{array}{cccc}1 & 0& 0 & 0 \\0 & 1 & 0 & 0 \\\alpha_1 & \alpha_2 & \alpha_3 & \alpha_3 \\\beta_1 & \beta_2 & \beta_3 & \beta_4\end{array}\right|\not=0,
\end{equation}
that is 
\begin{equation}
\label{eqlmmlzmqhlh}
p_{34}:=\alpha_3\beta_4-\beta_3\alpha_4\not=0.
\end{equation}
Hence we may always make $p_{34}=1$ and equation \eqref{eqb6} may be written
\begin{equation}
\label{eqb7}
dxd^2u-dud^2x=Adx^3+Bdx^2du+Cdxdu^2+Ddu^3
\end{equation}
where
\begin{equation}
\label{eqb8}
\begin{split}
&A=-p_{42};\quad B=-2b-p_{14};\quad C=-2a-p_{23}\\
&D=p_{13};\quad p_{34}=1.
\end{split}
\end{equation}
Conversely equations \eqref{eqb8} may be solved for the $p_{ij}$ (for $ij\not=12$) and $p_{12}$ is given by the quadratic relation between the Plücker coordinates
$$p_{12}p_{34}+p_{13}p_{42}+p_{14}p_{23}=0$$
that is to say, equation \eqref{eqb7} determines at each point of $\mathbf S$ a plane $\omega_P$ through the point.
\end{proof}
Let us now find the locus of the osculating planes to the integral curves of \eqref{eqb7}. Analytically a point $\chi_0 y+\chi_1 y_x+\chi_2y_u+\chi_3 y_{xx}+\chi_4y_{uu}$ of the osculating plane is described by the local coordinates
\begin{equation}
\label{eqpr56478}
\begin{split}
\chi_0\qquad(arbitrary)\\
\chi_1=d^2x-2adxdu+\rho dx\\
\chi_2=d^2u-2bdxdu+\rho du\\
\chi_3=dx^2\\
\chi_4=du^2,
\end{split}
\end{equation}
with $\rho$ an arbitrary parameter. Eliminating $\rho$, $dx$, $du$, $d^2x$, $d^2u$, using \eqref{eqpr56478} and \eqref{eqb7}, we obtain the locus of the the osculating planes to the integral curves of \eqref{eqb7}, as
\begin{equation}
\label{eqb090909}
(\chi_2-A\chi_3-(C+2a)\chi_4)^2\chi_3=(\chi_1+D\chi_4+(B-2b)\chi_3)^2\chi_4
\end{equation}
\subsection{Interpretation of projective connections on surfaces with a family of asymptotics}
\label{bomp3}
We suppose now that the surface $\mathbf S$ satisfies a "parabolic" equation
\begin{equation}
\label{eqb9}
y_{uu}=ay_x+by_u+cy,
\end{equation}
where $a$, $b$, $c$ are functions of $x$, $u$; the curves $dx=0$ are the asymptotics. We refer to \cite[Ch. IV \S. 6]{lane}.

 Let us consider through each point $P\in\mathbf S$, a pencil (projective linear combination) of distinct planes whose axis (or base), not belonging to the (embedded) tangent plane $\Pi_P$, we call $l$. We suppose that this pencil is projectively related to the pencil of tangents at $P$. If, as we suppose, the $\C P_3$ to which this pencil of planes belongs, does not contain $\Pi_P$, it will intersect it in a line \cite[p. 3, Th. VI.]{sempleroth}; that is, there is just (only) one plane $\omega_0$ of the pencil which intersects $\Pi_P$ in a line. We assume a projective transformation exists such that to this plane $\omega_0$ corresponds the asymptotic tangents $u$, $dx=0$. Assuming that this has been done at each point $P\in\mathbf S$, we consider the system of curves defined on the surface by the following property: the osculating plane at $P$ to the curve which passes through $P$ with tangent $t$ must intersect in a line the corresponding projectively related plane of the pencil. It is clear that given the element $E_1(P,t)$ we can always construct the element $E_2$; in fact, the $\C P_3=:S(2,1)$ relative to $t$ intersects the plane of the pencil which corresponds to $t$ in the given projective transformation, in a line $l_1$. The lines $l_1$ and $t$ determine the required osculating plane. 

We have
\begin{theorem}
\label{thb3}
Given a point $P$ of $\mathbf S$ (which possesses a family of asymptotic curves), and a pencil of planes through $P$ projectively related to the pencil of tangents and such that the plane corresponding to the asymptotic tangent is the only plane of the pencil which intersects the (embedded) tangent plane in a line, the system of curves whose osculating planes intersect the corresponding plane of the pencil satisfy an equation \eqref{eqpr11}, and, conversely, each such equation of this type represents such a system of curves.
\end{theorem}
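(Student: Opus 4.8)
The plan is to follow the proof of \theoref{thb2} line by line, with the Laplace equation \eqref{eqb2} replaced by the parabolic equation \eqref{eqb9} and the single plane $\omega_P$ replaced by the one-parameter pencil. First I would fix at $P$ a system of local coordinates $(\alpha_0,\alpha_1,\alpha_2,\alpha_3,\alpha_4)$ in $S(2,0)=\C P_4$ adapted to the projective frame $y,y_x,y_u,y_{xx},y_{xu}$; these five points do form a frame of $\C P_4$ precisely because \eqref{eqb9} writes $y_{uu}$ linearly in terms of $y,y_x,y_u$. As in \theoref{thb2}, the hyperplane $\alpha_0=0$ (coordinates $y_x,y_u,y_{xx},y_{xu}$) is the ``space at infinity'' of $S(2,0)$; since both a plane of the pencil and the osculating plane $O_{2,\mathbf C,P}$ of a curve through $P$ contain $P$, they meet in a line if and only if their lines at infinity meet in a point of this $\C P_3$.

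Then I would compute the two lines at infinity. Using \eqref{eqpr14} and eliminating $y_{uu}$ via \eqref{eqb9}, the line at infinity of the osculating plane of a curve $u=u(x)$ through $P$ is spanned by the points with coordinates $(dx,du,0,0)$ and $(d^2x+a\,du^2,\;d^2u+b\,du^2,\;dx^2,\;2\,dx\,du)$; its Plücker coordinates $\tau_{ij}$ are then explicit, analogous to \eqref{eqb5} --- in particular $\tau_{34}=0$ and $\tau_{12}=dxd^2u-dud^2x+b\,dx\,du^2-a\,du^3$. On the pencil side, using the assumed projective correspondence with the pencil of tangents I may take the plane attached to the tangent direction $[dx:du]$ to have line at infinity spanned by $\bar Q$ (the point at infinity of the axis $l$, which is not on the line $\langle y_x,y_u\rangle$ because $l\not\subset\Pi_P$) and $dx\,\bar R_1+du\,\bar R_2$ for fixed $\bar R_1,\bar R_2$; its Plücker coordinates $p_{ij}$ are then linear forms in $dx,du$ with constant coefficients. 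Pairing the line at infinity of $\Pi_P=\langle y,y_x,y_u\rangle$, whose only non-zero Plücker coordinate is $p_{12}$, against $(p_{ij})$ shows that a plane of the pencil meets $\Pi_P$ in a line exactly when its coordinate $p_{34}$ vanishes; the hypothesis that this happens only in the asymptotic direction $dx=0$ therefore forces $p_{34}$ to be a non-zero multiple of $dx$, and after rescaling $\bar R_1$ I normalise $p_{34}=dx$.

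The computation now closes exactly as in \theoref{thb2}. The incidence of the osculating plane and the plane of the pencil is the vanishing of the Plücker pairing
$$p_{12}\tau_{34}+p_{34}\tau_{12}-p_{13}\tau_{24}-p_{24}\tau_{13}+p_{14}\tau_{23}+p_{23}\tau_{14}=0,$$
equivalently of the $4\times4$ determinant built as in \eqref{eqb3}; using $\tau_{34}=0$, $p_{34}=dx$, and dividing through by $dx$, the term $p_{34}\tau_{12}$ contributes $dxd^2u-dud^2x$ together with a cubic form in $dx,du$, and the remaining terms contribute a further cubic form once the $p_{ij}$ are replaced by their expressions in $dx,du$. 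One is left with an equation of the form $dxd^2u-dud^2x=A\,dx^3+B\,dx^2du+C\,dxdu^2+D\,du^3$, i.e.\ \eqref{eqpr35678}, hence \eqref{eqpr11}, with $A,B,C,D$ explicit polynomials in $a,b$ and the coefficients of the $p_{ij}$. For the converse, these four relations are solved for four of those coefficients; the remaining coefficients, the two quadratic Plücker relations $p_{12}p_{34}+p_{13}p_{42}+p_{14}p_{23}=0$ satisfied by the $[1:0]$- and $[0:1]$-members of the pencil, and the condition that those two lines meet (so that the family is really a pencil), are accommodated by the leftover freedom --- this is the analogue of the $p_{12}$-step at the end of the proof of \theoref{thb2}, and the freedom simply reflects that many pencils yield the same equation. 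Since $p_{34}=dx\neq0$ generically, the pencil produced is genuinely projectively related to the pencil of tangents and its plane $\omega_0$ is indeed the only one meeting $\Pi_P$ in a line, so the construction is admissible.

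I expect the only real obstacle to be the bookkeeping in the closing step: checking that, once $p_{34}=dx$, division by $dx$ leaves no stray second-order differentials $d^2x,d^2u$ (so that the outcome is genuinely of type \eqref{eqpr11}), and, on the converse side, that the linear system for the coefficients of the $p_{ij}$ is always solvable compatibly with the Plücker relations. These are precisely the points at which the normalisation coming from the hypothesis on $\omega_0$ and the parabolicity \eqref{eqb9} are used in an essential way.
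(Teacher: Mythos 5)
Your proposal is correct and follows essentially the same route as the paper: the frame $y,y_x,y_u,y_{xx},y_{xu}$ adapted to \eqref{eqb9}, the moving point $\beta\,dx+\beta'\,du$ realising the projectivity with the pencil of tangents, the normalisation forcing the $34$-Plücker coordinate of the moving plane to be a non-zero multiple of $dx$ (the paper achieves this by setting $\beta_3'=\beta_4'=0$ and $p_{34}=1$), the determinantal incidence condition \eqref{eqb0989} yielding \eqref{eqb11}--\eqref{eqb12}, and the converse by solving for the $p_{ij},p_{ij}'$ with leftover freedom (cf.\ Remark~\ref{bprem1}). The bookkeeping worry you flag does work out exactly as you expect, since $\tau_{34}=0$ and the only second-order contribution sits in $p_{34}\tau_{12}$.
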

\begin{proof}
Let us find the differential equation satisfied by this system of curves. We firstly introduce the local coordinates system $y$, $y_x$, $y_u$, $y_{xx}$, $y_{xu}$ in $\C P_4$, i.e. a $2$-parameters projective frame on $\C P_4$, a projective frame for all $(x,u)\in\mathscr D$. This means that the local coordinates shall be proportional to $\alpha_0$, $\alpha_1$, $\alpha_2$, $\alpha_3$, $\alpha_4$ if that point is defined by an expression of the form
$$\alpha_0y+\alpha_1y_x+\alpha_2y_u+\alpha_3y_{xx}+\alpha_4y_{xu}.$$The axis of the pencil may be determined by the point $P$ and the point $(\alpha_1,\alpha_2,\alpha_3,\alpha_4)$. A third point which with these determines a plane of the pencil, may describe a point range which is projective to the pencil of tangents (on which $dx/du$ is projective coordinate). The local coordinates of such a point will be of the form
\begin{equation}
\label{eqb10}
\begin{split}
&\beta_1dx+\beta_1^\prime du,\;\;\beta_2dx+\beta_2^\prime du,\\
&\beta_3dx+\beta_3^\prime du,\;\;\beta_4dx+\beta_4^\prime du.
\end{split}
\end{equation}
To the asymptotic tangent $dx=0$ corresponds the point $(\beta_1^\prime,\beta_2^\prime,\beta_3^\prime,\beta_4^\prime)$, on the point range, that is the plane of the $3$ points $P$, $(\alpha_1,\alpha_2,\alpha_3,\alpha_4)$; $(\beta_1^\prime,\beta_2^\prime,\beta_3^\prime,\beta_4^\prime)$. The condition that this plane intersects the (embedded) tangent plane $\Pi_P$ in a line is
$$\alpha_3\beta_4^\prime-\alpha_4\beta_3^\prime=0.$$
Making use of the arbitrariness of the point $\beta^\prime=(\beta_1^\prime,\beta_2^\prime,\beta_3^\prime,\beta_4^\prime)$ we can always make $\beta_3^\prime=\beta_4^\prime=0$; and of course $\alpha_3\beta_4-\alpha_4\beta_3\not=0$, otherwise every plane of the pencil would intersect $\Pi_P$ in a line (reasoning similar as in \eqref{eqhjlhmgersm} and \eqref{eqlmmlzmqhlh}, using the fact that the $2$ generating planes of the pencil in question are the plane generated by $P$, $(\alpha_1,\alpha_2,\alpha_3,\alpha_4)$; $(\beta_1^\prime,\beta_2^\prime,\beta_3^\prime,\beta_4^\prime)$, and the plane generated by $P$, $(\alpha_1,\alpha_2,\alpha_3,\alpha_4)$; $(\beta_1,\beta_2,\beta_3,\beta_4)$).

Introducing now the coordinates $p_{ij}$, $p_{ij}^\prime$ for the lines connecting the point $(\alpha_1,\alpha_2,\alpha_3,\alpha_4)$ with $(\beta_1,\beta_2,\beta_3,\beta_4)$, $(\beta_1^\prime,\beta_2^\prime,\beta_3^\prime,\beta_4^\prime)$. We find the equation of the system of curves in the following determinantal form, which expresses the intersection of the osculating plane and the considered plane of the pencil (or of their lines at infinity) 
\begin{equation}
\label{eqb0989}
\left|\begin{array}{cccc}dx & du& 0 & 0 \\d^2x+adu^2 & d^2u+bdu^2 & dx^2 & 2dxdu \\\alpha_1 & \alpha_2 & \alpha_3 & \alpha_4 \\\beta_1dx+\beta_1^\prime du & \beta_2dx+\beta_2^\prime du & \beta_3dx & \beta_3dx\end{array}\right|=0.
\end{equation}
This gives away from the asymptotic curves $dx=0$
\begin{equation}
\label{eqb11}
\begin{split}
p_{34}(dxd^2u-dud^2x)=-p_{42}dx^3-(2p_{23}+p_{14}+p_{42}^\prime)dx^2du\\-(bp_{34}-2p_{13}+2p_{23}^\prime
+p_{14}^\prime)dxdu^2+(ap_{34}+2p_{13}^\prime)du^3.
\end{split}
\end{equation}
Putting $p_{34}=1$ and comparing the coefficients of this equation with those of equation \eqref{eqpr11} we find
\begin{equation}
\label{eqb12}
A=-p_{42},\;\;B=-(2p_{23}+p_{14}+p_{42}^\prime),\;\;C=2p_{13}-b-(2p_{23}^\prime+p_{14}^\prime),\;\;D=a+2p_{13}^\prime.
\end{equation}
These relations show that our curves satisfy an equation \eqref{eqpr11} and, conversely that, given $A$, $B$, $C$, $D$ arbitrarily, it is always possible to choose $p_{ij}$ and $p_{ij}^\prime$ so that these equations are satisfied.

\end{proof}
\begin{remark}
\label{bprem1}
The pencil and the projective transformation are not completely determined by \eqref{eqpr11}, but there are infinitely many of them. In fact the given geometric configuration at each point depends on seven parameters, while equation \eqref{eqpr11} depends on four coefficients.
\end{remark}
\subsection{Interpretation of projective connections on a general surface}
\label{bomp4}
If $\mathbf S$ is a general surface with respect to the neighborhoods of the second order of its points, the second osculating space at a generic point $P$ is a $\C P_5$ (an introduction to general surfaces is given in \cite[p. 414-425]{lane}). In this $\C P_5$ we take an arbitrary space $\C P_3$, $\Omega_P$, not intersecting the (embedded) tangent plane $\Pi_P$ except at $P$ (possible if $\Omega$ is sufficiently general by \cite[p.~3, Th.~VI.]{sempleroth}); we have then a congruence $\{\Omega_P\}$ of spaces $\C P_3$ associated with $\mathbf S$. 

We say that a $2$-parameter system of curves $\mathbf C$ is a spatial system if the osculating planes $O_{2,P,\mathbf C}$ to the curves $\mathbf C$ of the system at each point $P$ intersect in a line $l_2$, $\Omega_P$, the $\C P_3$ through $P$. Such systems exist. In fact, given an element $E_1$ ($P$ and the tangent $t$) this determines a space $\C P_3=: S(2,1)$ which intersects $\Omega_P$ in a line $l_2$ (see \cite[p.~3, Th.~VI.]{sempleroth}); the plane $(l_2t)$ is therefore the osculating plane to the element $E_2$ of a curve. These elements belong to the $2$-parameter family of curves of the searched for system. 

We prove
\begin{theorem}
\label{thb4}
A spatial system is represented by an equation \eqref{eqpr11}; and conversely, every equation of this type represents an spatial system. The spatial systems, give then, the geometric interpretation of every equation \eqref{eqpr11} on a surface with $S(2,0)=\C P_5$.
\end{theorem}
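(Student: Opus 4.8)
The plan is to repeat, one dimension higher, the argument used for \theoref{thb2} and \theoref{thb3}. The point is that for a general surface no Laplace-type relation \eqref{eqb2} or parabolic relation \eqref{eqb9} is available, so at each point $P$ one must carry the full second-order frame $y,y_x,y_u,y_{xx},y_{xu},y_{uu}$ of $S(2,0)=\C P_5$, with local coordinates $(\alpha_0,\alpha_1,\ldots,\alpha_5)$ for the point $\alpha_0y+\alpha_1y_x+\alpha_2y_u+\alpha_3y_{xx}+\alpha_4y_{xu}+\alpha_5y_{uu}$; I call $\alpha_0=0$ the $\C P_4$ at infinity of $S(2,0)$. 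The space $\Omega_P$ is then fixed by $P$ together with three points of this $\C P_4$ at infinity, with coordinates $\alpha^{(k)}=(\alpha_1^{(k)},\ldots,\alpha_5^{(k)})$ ($k=1,2,3$), while, by \eqref{eqpr14}, the osculating plane of a curve $\mathbf C$ through $P$ is spanned by $P$ and the two points whose coordinates in the $\C P_4$ at infinity are $(dx,du,0,0,0)$ and $(d^2x,d^2u,dx^2,2dx\,du,du^2)$.

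Since $O_{2,P,\mathbf C}$ and $\Omega_P$ both contain $P$, they meet in a line precisely when their traces at infinity meet, i.e., by the incidence criterion of \cite[p.~235, \S.1.1.]{sempleroth} already invoked for \eqref{eqb3}, exactly when
\[
\det\begin{pmatrix}
dx & du & 0 & 0 & 0\\
d^2x & d^2u & dx^2 & 2dx\,du & du^2\\
\alpha_1^{(1)} & \alpha_2^{(1)} & \alpha_3^{(1)} & \alpha_4^{(1)} & \alpha_5^{(1)}\\
\alpha_1^{(2)} & \alpha_2^{(2)} & \alpha_3^{(2)} & \alpha_4^{(2)} & \alpha_5^{(2)}\\
\alpha_1^{(3)} & \alpha_2^{(3)} & \alpha_3^{(3)} & \alpha_4^{(3)} & \alpha_5^{(3)}
\end{pmatrix}=0 .
\]
I would expand this $5\times5$ determinant by the Laplace rule along its first two rows, writing $\tau_{ij}$ for the $2\times2$ minors (the Plücker coordinates of the osculating line at infinity) and $\pi_{klm}$ for the complementary $3\times3$ minors (the Grassmann coordinates of the plane at infinity of $\Omega_P$). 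Here $\tau_{12}=dx\,d^2u-du\,d^2x$, $\tau_{13}=dx^3$, $\tau_{14}=2dx^2du$, $\tau_{15}=dx\,du^2$, $\tau_{23}=dx^2du$, $\tau_{24}=2dx\,du^2$, $\tau_{25}=du^3$, and $\tau_{34}=\tau_{35}=\tau_{45}=0$, so only seven terms survive and the condition becomes
\[
\pi_{345}(dx\,d^2u-du\,d^2x)=\pi_{245}\,dx^3-(2\pi_{235}+\pi_{145})\,dx^2du+(\pi_{234}+2\pi_{135})\,dx\,du^2-\pi_{134}\,du^3 .
\]
The hypothesis that $\Omega_P$ meets $\Pi_P=\langle y,y_x,y_u\rangle$ only at $P$ is, by the same criterion, precisely $\pi_{345}\neq 0$ (the minor in the columns $3,4,5$, exactly as in \eqref{eqlmmlzmqhlh}), so I may normalize $\pi_{345}=1$; comparing with \eqref{eqpr35678} (equivalently \eqref{eqpr11}) then yields $A=\pi_{245}$, $B=-(2\pi_{235}+\pi_{145})$, $C=\pi_{234}+2\pi_{135}$, $D=-\pi_{134}$. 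This shows every spatial system satisfies an equation \eqref{eqpr11}.

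For the converse, given arbitrary $A,B,C,D$ as functions of $x,u$, I would set $\pi_{345}=1$, $\pi_{245}=A$, $\pi_{134}=-D$, pick any splitting realizing $B$ and $C$ (for instance $\pi_{235}=0$, $\pi_{145}=-B$, $\pi_{135}=0$, $\pi_{234}=C$), and then solve for the remaining Grassmann coordinates $\pi_{123},\pi_{124},\pi_{125}$ from the quadratic Plücker relations defining $G(3,5)\cong G(2,5)$. One checks these relations are compatible with the prescribed linear data — $G(3,5)$ being six-dimensional and only four conditions being imposed, a two-parameter family of solutions survives at each point — so the decomposable multivector so obtained genuinely cuts out a $\C P_3$, giving a congruence $\{\Omega_P\}$ whose associated spatial system is the integral-curve family of \eqref{eqpr11}. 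As in \remfref{bprem1}, the congruence is not uniquely determined.

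The only real work beyond transcribing the $\C P_4$ computations of \theoref{thb2} and \theoref{thb3} is (i) keeping the signs straight in the five-by-five Laplace expansion, and (ii) in the converse, checking that the Plücker relations of $G(3,5)$ can always be solved for the auxiliary coordinates $\pi_{123},\pi_{124},\pi_{125}$ once the others are prescribed, i.e. that the multivector we build is genuinely decomposable and hence represents an honest $\C P_3$. I expect (ii) to be the main obstacle, though it is a routine consequence of the defining equations of the Grassmannian.
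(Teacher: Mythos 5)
Your proposal follows essentially the same route as the paper: the same $5\times5$ incidence determinant in the $\C P_4$ at infinity of $S(2,0)=\C P_5$, the same Grassmann coordinates of the plane at infinity of $\Omega_P$ with the normalization $p_{345}=1$ justified by the transversality of $\Omega_P$ and $\Pi_P$, and identical formulas $A=p_{245}$, $B=-(2p_{235}+p_{145})$, $C=2p_{135}+p_{234}$, $D=-p_{134}$. For the converse the paper contents itself with the dimension count (ten coordinates, three quadratic relations, five linear conditions, hence a $2$-parameter family of admissible $\C P_3$'s), whereas you exhibit an explicit decomposable solution and verify the Plücker relations — a slightly more detailed version of the same argument, and your residual worry about decomposability is indeed routine.
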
  
\begin{proof}
Let us see that the spatial systems are represented by an equation \eqref{eqpr11}. 

Assuming local coordinates $(\alpha_0,\alpha_1,\alpha_2,\alpha_3,\alpha_4,\alpha_5)$ for a point
$$\alpha_0y+\alpha_1 y_x+\alpha_2 y_u+\alpha_3y_{xx}+\alpha_4y_{xu}+\alpha_5y_{uu},$$
the osculating plane at $P$ of a curve is determined by the point $P$ and the points given in local coordinates by $(0,d^2x,d^2u,dx^2,2dxdu,du^2)$ and $(0,dx,du,0,0,0)$. The space $\Omega_P$ may be determined by $P$ and the following $3$ points $(0,\alpha_1,\alpha_2,\alpha_3,\alpha_4,\alpha_5)$; $(0,\beta_1,\beta_2,\beta_3,\beta_4,\beta_5)$; $(0,\gamma_1,\gamma_2,\gamma_3,\gamma_4,\gamma_5)$; as before we call the $\C P_4$: $\alpha_0=0$ the $4$-space at infinity. The intersection of the osculating plane with $\Omega_P$ (or equivalently of the line at infinity of the osculating plane and the plane at infinity of $\Omega_P$) is determined by the determinental condition
\begin{equation}
\label{eqb13}
\left|\begin{array}{ccccc}d^2x & d^2u & dx^2 & 2dxdu & du^2 \\dx & du & 0 & 0 & 0 \\\alpha_1 & \alpha_2 & \alpha_3 & \alpha_4 & \alpha_5 \\\beta_2 & \beta_2 & \beta_3 & \beta_4 & \beta_5 \\\gamma_1 & \gamma_2& \gamma_3 & \gamma_4 & \gamma_5\end{array}\right|=0.
\end{equation}
Introducing the Grassmann coordinates of the plane of the last three points
$$p_{ikl}=\left|\begin{array}{ccc}\alpha_i & \alpha_k & \alpha_l \\\beta_i & \beta_k & \beta_l \\\gamma_i & \gamma_k & \gamma_l\end{array}\right|,$$
we remark that the condition $p_{345}\not=0$, must hold. Indeed we assumed that $\Omega_P$ do not intersect $\Pi_P$ in a line. Recalling that $\Pi_P$ is the projective plane generated by the points $y$, $y_x$, $y_u$, we see that the line at infinity of $\Pi_P$ intersects the plane at infinity of $\Omega_P$ if and only if the following determinant vanishes
\begin{equation}
\label{eqdhrùjpohdùù}
\left|\begin{array}{ccccc}1 & 0 & 0 & 0 & 0 \\0 & 1 & 0 & 0 & 0 \\\alpha_1 & \alpha_2 & \alpha_3 & \alpha_4 & \alpha_5 \\\beta_2 & \beta_2 & \beta_3 & \beta_4 & \beta_5 \\\gamma_1 & \gamma_2& \gamma_3 & \gamma_4 & \gamma_5\end{array}\right|=0.
\end{equation}
Hence $p_{345}\not=0$ expresses the fact that $\Omega_P$ do not intersect $\Pi_P$ in a line. So we may make $p_{345}=1$.

Therefore the preceding equation \eqref{eqb13} may be written
\begin{equation}
\label{eqb14}
dxd^2u-dud^2x=Adx^3+Bdx^2du+Cdxdu^2+Ddu^3
\end{equation}
where
\begin{equation}
\label{eqb15}
\begin{split}
&p_{345}=1,\; \;A=p_{245}\\
&B=-(2p_{235}+p_{145}),\\
&C=2p_{135}+p_{234},\;\;D=-p_{134}.
\end{split}
\end{equation}
Thence the spatial system satisfies an equation \eqref{eqpr11}. We must now prove the converse, that is, that every equation \eqref{eqpr11} represents a spatial system of curves, or that, given the coefficients $A$, $B$, $C$, $D$ arbitrarily it is always possible to determine the $p_{ikl}$ in order that they satisfy \eqref{eqb15}.

The coordinates $p_{ikl}$ (which are ten homogeneous coordinates not linearly connected) must satisfy three independent quadratic relations (analogous to that satisfied by the Plücker coordinates of a line). We have five relations \eqref{eqb15} between these coordinates and the coefficients $A$, $B$, $C$, $D$, so that there are $2$-parameter sets of coordinates satisfying these equations. In fact, from the expressions of $B$ and $C$ we see that it is impossible to determine the four $p_{ikl}$ which appear in them; two of these $p_{ikl}$ are arbitrary. This arbitrariness corresponds to the geometrical fact that a spatial system determines at a point $P$ not only one $\C P_3$ but a $2$-parameter family of them. 
\end{proof}
Using reasoning similar to the previous \subsecref{wilc}, \subsecref{bomp1} we find that the locus of the osculating planes to the integral curves of \eqref{eqb14} is given by (with previous notations)
\begin{equation}
\label{eqb16}
\begin{split}
&2(\chi_2p_{345}-\chi_3p_{245}+\chi_4p_{235}-\chi_5p_{234})\chi_3=(\chi_1p_{345}-\chi_3p_{145}+\chi_4p_{135}-\chi_5p_{134})\chi_4\\
&(\chi_2p_{345}-\chi_3p_{245}+\chi_4p_{235}-\chi_5p_{234})\chi_4=(\chi_1p_{345}-\chi_3p_{145}+\chi_4p_{135}-\chi_5p_{134})\chi_5\\
&\chi_4^2-4\chi_3\chi_5=0
\end{split}
\end{equation}
\begin{remark}
If a surface $\mathbf S\subset \C P_n$ with parametric vector $y(x,u)$ has $S(2,0)=\C P_2$ then $y$ satisfies a completely integrable system (with scalar coefficients) of three independent second order partial differential equation of the form \cite[Ch. IV p. 112, Ch. IX]{lane}
\begin{equation}
\label{eqb27659}
\begin{split}
y_{xu}&=cy+ay_x+by_u\\
y_{xx}&=py+\alpha y_x+\beta y_u\\
y_{uu}&=qy+ry_x+sy_u.
\end{split}
\end{equation}
The straight lines of $\mathbf S$ then satisfy a differential equation of the projective connection type given by
\begin{equation}
\label{eqb74347}
u^{\prime\prime}=-\beta+(\alpha-2b)u^\prime-(s-2a)u^{\prime2}+ru^{\prime3}.
\end{equation}
Indeed the differential equation of the straight lines of the plane can be calculated by making use of the fact that $y, y^\prime, y^{\prime\prime}$ must satisfy a linear relation when the accents indicate total differentiation with respect to $x$ along a straight line of the plane, as we have \eqref{eqpr5}
\begin{equation*}
\begin{split}
&y^\prime=y_x+y_uu^\prime\\
&y^{\prime\prime}=y_{xx}+2y_{xu} u^\prime+y_{uu}u^{\prime2}+y_uu^{\prime\prime},\quad y^\prime:=\dfrac{dy}{dx},\quad y^{\prime\prime}:=\dfrac{d^2y}{dx^2},\quad u^\prime=\dfrac{du}{dx},\quad u^{\prime\prime}=\dfrac{d^2u}{dx^2}.
\end{split}
\end{equation*}
Then \eqref{eqb74347} follows by using \eqref{eqb27659}, that $y$, $y_x$, $y_u$ are linear independent and that one has $\det(y,y^\prime,y^{\prime\prime})=0$. 

In case a surface $\mathbf S\subset\C P_n$ with parametric vector equation $y(x,u)$ is developable then \cite[p. 112]{lane} its parametric vector equation satisfies a completely integrable system (with scalar coefficients)
\begin{equation}
\label{eq87875799}
\begin{split}
y_{xu}&=\beta y+ay_x+by_u\\
y_{xx}&=py+\alpha y_x
\end{split}
\end{equation}
\end{remark}
\section{"Metric study" of projective connections: The transformation of centres}
 \label{kas}
 In this \secref{kas} we study the effect of the rational transformation \eqref{eq2} on points of the higher Cauchy data plane $\C^2(u^\prime,u^{\prime\prime})$, above a fixed point $(x,u)\in\C^2$ and deduce some consequences. 
 
 Let 
 \begin{equation}
 \label{eq1}
 \{X=\phi(x,u), \,U=\psi(x,u),\,\phi(x,u), \,\psi(x,u)\in\mathcal{O}:=\C\{x,u\},\,J:=\phi_x\psi_u-\phi_u\psi_x\not=0\}
 \end{equation}
 be the pseudo-group $\mathscr{P}$ of point transformations of the plane $(\C^2,0)$. 
Let $u^\prime$ and $u^{\prime\prime}$ be jet coordinates on $J_2(\mathcal O)$, the second order jet space. Recall that the action of $\mathscr{P}$ on $(u^\prime,u^{\prime\prime})$ is given by
 \begin{equation}
 \label{pro3}
 \begin{split}
 &U^\prime=\dfrac{\psi_x+u^\prime\psi_u}{\phi_x+u^\prime\phi_u}\\
 &U^{\prime\prime}=\dfrac{\lambda+\mu u^\prime+\nu u^{\prime2}+\xi u^{\prime3}+Ju^{\prime\prime}}{(\phi_x+u^\prime\phi_u)^3},
 \end{split}
 \end{equation}
 where $\lambda$, $\mu$, $\nu$, $\xi$ belong to $\mathcal{O}$ and depend on derivatives of $\phi$ and $\psi$ up to order $2$. Since our study is near a given point $P\in (\C^2,0)$ and we are only interested in the variations of the tuple $(u^\prime,u^{\prime\prime})$, we consider $\lambda$, $\mu$, $\nu$, $\xi$ as well as $a:=\psi_u$, $b:=\psi_x$, $c:=\phi_u$ and $d:=\phi_x$ as numbers. In order to simplify our expressions and calculations we introduce the notation
 \begin{equation}
 \label{nota}
 v:=u^\prime,\,\;w=u^{\prime\prime};\,\;V:=U^\prime,\,\;W:=U^{\prime\prime}.
 \end{equation}
 Thus the transformation \eqref{pro3} takes the form
 \begin{equation}
 \label{pro4}
 \begin{split}
 V=\dfrac{av+b}{cv+d},\,\;W=\dfrac{\lambda+\mu v+\nu v^2+\xi v^3+(ad-bc)w}{(cv+d)^3}.
 \end{split}
 \end{equation}
 Hence the pseudo-group $\mathscr{P}$ induces on the tuple $(v,w)$ the eight-parameter (local) Lie group of transformations: \eqref{pro4} whose infinitesimal generators are given by
 \begin{equation}
 \label{pro5}
 \dfrac{\partial}{\partial v},\,\,\dfrac{\partial}{\partial w},\,\,v\dfrac{\partial}{\partial v},\,\,w\dfrac{\partial}{\partial w},\,\,v^2\dfrac{\partial}{\partial v}+3vw\dfrac{\partial}{\partial w},\,v\dfrac{\partial}{\partial w},v^2\dfrac{\partial}{\partial w},\,v^3\dfrac{\partial}{\partial w}.
 \end{equation} 
Equation\eqref{pro5} defines the Lie algebra of $GL(2,\C)\ltimes S^3\C^2$, where $S^3\C^2\simeq\C^4$ is the third symmetric product of $\C^2$ with itself, see \cite[p.~341]{olver1}.
 \begin{equation}
 \label{lfqhfzhlzdlh}
 \left[z_1:z_2:z_3:z_4:z_5\right]=\left[1:u^\prime:u^{\prime2}:u^{\prime3}:u^{\prime\prime}\right]=:\left[1:v:v^2:v^3:w\right].
 \end{equation}
 This gives and embedding of $(v,w)$ in $\C P_4$. The image of $(v,w)$ lies inside the surface $\mathscr S_3$ of degree $3$
  \begin{equation}
\label{hofkfhjuhf}
 z_1z_3-z_2^2=0,\quad z_2z_3-z_1z_4=0,\quad z^2_3-z_2z_4=0.
\end{equation}
\begin{definition}[\cite{sempleroth}]
\label{cone}
Let $n\in\N_{\geq2}$ and $0\leq h\leq n-2$. Any algebraic manifold in $\C P_n$ which is generated by a variable $\C P_{h+1}$ through a fixed $\C P_h$ is called a $\C P_h$-cone of $\C P_n$. To generate a $\C P_h$-cone, the general procedure is to join a fixed $\C P_h$ to every point of a fixed $k$-dimensional manifold $V_k$ $(k\leq n-h-2)$: the joining $\C P_{h+1}$'s (the projective spaces of minimal dimension containing the fixed $\C P_h$ and the point of $V_k$, see \cite[p.~4, Th. VII.]{sempleroth}) generate a $\C P_h$-cone which we denote by $\C P_h(V_k)$, and we call $\C P_h$ the vertex of this cone, $V_k$ its directrix, and the $\C P_{h+1}$'s its generators. The dimension of $\C P_h(V_k)$ is in general $h+k+1$.
\end{definition}
\begin{remark}[\cite{sempleroth}]
With notation as in the previous \defiref{cone} if the fixed $\C P_h$ does not meet the directrix $V_k$, then the degree of $\C P_h(V_k)$ is equal to the degree of $V_k$. Given a point $p\not\in V_k$, the associated point cone $\C P_0(V_k)$ is the union of all projective lines through $p$ which meet $V_k$.
\end{remark}
Hence $\mathscr S_3$ is a $\C P_0$-cone (point-cone) in $\C P_4$ with vertex $\left[0:0:0:0:1\right]$, and directrix the twisted cubic, see \eqref{lfqhfzhlzdlh}. One may show (see \cite[eq.~3.8]{wone}) that the group of transformations $G$ acting on $\left[z_1:z_2:z_3:z_4:z_5\right]$ and given by
\begin{equation}
\label{eqhrhrttnr}
 \begin{split}
& Z_1=d^3z_1+3cd^2z_2+3c^2dz_3+c^3z_4\\
 &Z_2=bd^2z_1+b(bc+2ad)z_2+a(ad+2bc)z_3+a^2cz_4\\
 &Z_3=b^2dz_1+d(ad+2bc)z_2+c(bc+2ad)z_3+ac^2z_4\\
 &Z_4=b^3z_1+3ab^2z_2+3a^2bz_3+a^3z_4\\
 &Z_5=\lambda z_1+\mu z_2+\nu z_3+\xi z_4+(ad-bc)z_5
 \end{split}
\end{equation} 
is the largest transformation group preserving $\mathscr S_3$. Let $\C^2(u^\prime,u^{\prime\prime})$ be the higher Cauchy data plane, i.e. the plane $\C^2(u^\prime,u^{\prime\prime})$ above a given point $(x,u)$ of $\C^2$. We have
\begin{theorem}[\cite{wone}]
\label{theo1}
There is an equivalence of categories between the geometry of curves (resp. points) in the higher Cauchy data plane $\C^2(u^\prime,u^{\prime\prime})$ with respect to the restriction of the action of the pseudogroup $\mathscr P$ and the projective geometry on the cubic cone in $\C P_4$, i.e. the geometry of curves (resp. points) in $\mathscr S_3$ with respect to the group $G$.\end{theorem}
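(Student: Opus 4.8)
The functor realizing the equivalence should be induced by the map
$\iota\colon\C^2(u^\prime,u^{\prime\prime})\to\C P_4$, $(v,w)\mapsto[1:v:v^2:v^3:w]$, of \eqref{lfqhfzhlzdlh}. The plan is to show: (i) $\iota$ is a biregular isomorphism of $\C^2(u^\prime,u^{\prime\prime})$ onto a dense open subset of the cubic cone $\mathscr S_3$; (ii) $\iota$ intertwines the (partial) action of $\mathscr P$ on the higher Cauchy data plane given by \eqref{pro4} with the action of the group $G$ of \eqref{eqhrhrttnr} on $\mathscr S_3$; and then (iii) to package (i)--(ii) into an equivalence of the two action groupoids. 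Conceptually this says that passing from $\C^2(u^\prime,u^{\prime\prime})$ to $\mathscr S_3$, and from $\mathscr P$ to $G$, is exactly the projective completion of a partial action into a genuine group action.

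For (i) I would argue straight from the defining equations \eqref{hofkfhjuhf}. On the chart $z_1\neq0$ they give $z_3=z_2^2/z_1$ and $z_4=z_2^3/z_1^2$, so $\mathscr S_3\cap\{z_1\neq0\}$ is precisely the set of points $[1:v:v^2:v^3:w]$ with $(v,w)\in\C^2$; hence $\iota$ is a bijection onto this chart with regular inverse $[z_1:\dots:z_5]\mapsto(z_2/z_1,z_5/z_1)$. By \eqref{hofkfhjuhf} again, the complement $\mathscr S_3\cap\{z_1=0\}$ collapses to the single generator $L_\infty=\{z_1=z_2=z_3=0\}$ of the cone through the vertex $O=[0:0:0:0:1]$, sitting over the point $v=\infty$ of the directrix. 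So $\iota$ identifies $\C^2(u^\prime,u^{\prime\prime})$ biregularly with $\mathscr S_3\setminus L_\infty$, taking points to points and germs of analytic (or algebraic) curves to germs of such curves on $\mathscr S_3$ not lying in $L_\infty$, and conversely.

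For (ii) recall from the discussion preceding the theorem that $\mathscr P$ acts on $(v,w)$ only through the eight-parameter group \eqref{pro4}, and that $G$ of \eqref{eqhrhrttnr}---again with eight parameters $a,b,c,d,\lambda,\mu,\nu,\xi$---is the full group preserving $\mathscr S_3$. The map $g\mapsto\bar g$ sending $g\in\mathscr P$ to the element of $G$ with the same eight parameters is a surjective homomorphism onto $G$: every admissible tuple with $ad-bc\neq0$ is realized by the relevant $2$-jet of an actual (even polynomial) $\phi,\psi$, since $a,b,c,d$ form an invertible linear part and $\lambda,\mu,\nu,\xi$ depend in an invertible affine way on the pure second derivatives. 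The equivariance itself is then the substitution $(z_1,\dots,z_5)=(1,v,v^2,v^3,w)$ in \eqref{eqhrhrttnr}, which should yield $Z_1=(cv+d)^3$, $Z_2=(cv+d)^2(av+b)$, $Z_3=(cv+d)(av+b)^2$, $Z_4=(av+b)^3$ and $Z_5=\lambda+\mu v+\nu v^2+\xi v^3+(ad-bc)w$, i.e.\ $[Z_1:\dots:Z_5]=[1:V:V^2:V^3:W]$ with $V,W$ exactly as in \eqref{pro4}; hence $\iota(g\cdot(v,w))=\bar g\cdot\iota(v,w)$ wherever $cv+d\neq0$, i.e.\ wherever the left-hand side is defined (when $cv+d=0$ the image point falls into $L_\infty$, consistently with $\mathscr P$ being only a pseudo-group). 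This computation is the heart of the matter and the one place demanding care with the coefficients.

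For (iii) define $F$ on objects by $X\mapsto\iota(X)$ and on morphisms by $g\mapsto\bar g$. Since the action of $g$ on configurations in $\C^2(u^\prime,u^{\prime\prime})$ depends only on $\bar g$ and $g\mapsto\bar g$ is onto $G$, $F$ is full and faithful. For essential surjectivity one must account for the missing generator $L_\infty$: $G$ fixes the vertex $O$ (immediate from \eqref{eqhrhrttnr}) and acts on the set of generators of $\mathscr S_3$ through the transitive $PGL(2,\C)$-action on the directrix, so any curve on $\mathscr S_3$---which, unless it already meets $\{z_1\neq0\}$ densely, lies inside a single generator---and any point of $\mathscr S_3\setminus\{O\}$ can be carried by some $\bar g\in G$ into $\{z_1\neq0\}$, hence into the image of $F$. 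I expect the main obstacles to be the equivariance identity of step (ii) and this bookkeeping of the essential image; the structural identification in step (i) of $\mathscr S_3\cap\{z_1\neq0\}$ with the higher Cauchy data plane is immediate from \eqref{hofkfhjuhf}.
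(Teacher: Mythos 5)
The paper does not prove Theorem~\ref{theo1}: it is imported verbatim from \cite{wone}, so there is no internal proof to compare against. Your blind reconstruction is, however, exactly the argument that result rests on, and it is essentially correct: the identification of $\mathscr S_3\cap\{z_1\neq0\}$ with the image of \eqref{lfqhfzhlzdlh} follows immediately from \eqref{hofkfhjuhf} as you say (and $\mathscr S_3\cap\{z_1=0\}$ does collapse to the single generator $\{z_1=z_2=z_3=0\}$ through the vertex), the prolongation of $\mathscr P$ to the fibre $(u^\prime,u^{\prime\prime})$ does surject onto the eight-parameter group, and the groupoid packaging together with the transitivity of $G$ on the generators disposes of the essential-image bookkeeping. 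One caveat deserves emphasis at the step you yourself flag as delicate: the coefficients of \eqref{eqhrhrttnr} as printed in this paper do \emph{not} literally verify the equivariance identity you write. Substituting $(1,v,v^2,v^3,w)$, one finds $Z_1=(cv+d)^3$ and $Z_4=(av+b)^3$ as expected, but the printed $Z_2$ and $Z_3$ fail to factor: the second row agrees with $(av+b)^2(cv+d)$ except that its constant term reads $bd^2$ instead of $b^2d$, and the third row agrees with $(av+b)(cv+d)^2$ except that its constant term reads $b^2d$ instead of $bd^2$; moreover, even after that correction the two rows compute $V^2$ and $V$ respectively, i.e.\ in the order opposite to the embedding \eqref{lfqhfzhlzdlh}. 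So \eqref{eqhrhrttnr} carries transcription errors (the constant terms, and apparently the labels of the second and third rows, are interchanged), and the identity $[Z_1:\cdots:Z_5]=[1:V:V^2:V^3:W]$ that your proof requires is the correct statement of what \cite[eq.~3.8]{wone} is meant to encode. With that repair your argument goes through; nothing conceptual is missing.
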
 
\begin{definition}[degree]
The degree of a differential equation polynomial in $u^\prime$, $u^{\prime\prime}$ with coefficients in $\mathcal O$ is equal to the degree of the homogeneous polynomial defining the corresponding $2$-parameters curve of $\mathscr S_3$ in the variables $(z_i)_{1\leqslant i\leqslant5}$.
\end{definition}
To interpret the equations of various degrees geometrically without leaving the plane, consider for an element $(v,w)$ the corresponding "centre of curvature", that is the rational transformation given by
\begin{equation}
\label{eq2}
x_0=-\dfrac{v(1+v^2)}{w};\quad y_0=\dfrac{1+v^2}{w}.
\end{equation}
A point transformation, operating on the element $(v,w)$, according to \eqref{pro4}, induces the following transformation of the centres
\begin{equation}
\label{eq3}
\begin{split}
&X_0:=-\dfrac{V(1+V^2)}{W}=\dfrac{(ax_0-by_0)\{(ax_0-by_0)^2+(cx_0-dy_0)^2\}}{\lambda y_0^3-\mu x_0y_0^2+\nu x_0^2y_0-\xi x_0^3+(ac-bd)(x_0^2+y_0^2)}\\
&Y_0:=\dfrac{1+V^2}{W}=\dfrac{-(cx_0-dy_0)\{(ax_0-by_0)^2+(cx_0-dy_0)^2)\}}{\lambda y_0^3-\mu x_0y_0^2+\nu x_0^2y_0-\xi x_0^3+(ac-bd)(x_0^2+y_0^2)}.
\end{split}
\end{equation}
Hence under arbitrary point transformations the centres undergo a group of cubic transformations, according to \eqref{eq3}. Furthermore using equation \eqref{eq2}, we see that an equation of first rank 
$$Eu^{\prime\prime}=Du^{\prime3}+Cu^{\prime2}+Bu^{\prime}+A,\;\; A,\,B,\,C,\,D,\,E\in\mathcal O,$$
establishes a relation between $x_0$ and $y_0$ of the form
\begin{equation}
\label{eq4}
E(x_0^2+y_0^2)=-Ax_0^3+Bx_0^2y_0-Cx_0y_0^2+Dy_0^3.
\end{equation}
Hence
\begin{lemma}
\label{lem1}For a first degree equation (projective connection), the locus of the centres of curvature of the $1$-parameter of integral curves passing through a fixed point is a cubic curve of the type \eqref{eq4}. 
\end{lemma}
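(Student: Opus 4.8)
The plan is to produce a rational parametrization of the locus by the common tangent slope and then eliminate that parameter. Fix a point $P=(x,u)\in(\C^2,0)$. Since the first degree equation $Eu^{\prime\prime}=Du^{\prime3}+Cu^{\prime2}+Bu^{\prime}+A$ is solved for $u^{\prime\prime}$, the integral curve through $P$ is determined by the slope $v=u^{\prime}$ it carries at $P$ (this is the uniqueness statement for \eqref{eqpr11}), and then $w=u^{\prime\prime}$ at $P$ is forced to equal $(A+Bv+Cv^{2}+Dv^{3})/E$. Hence the $1$-parameter family of integral curves through $P$ is indexed by $v$, and by \eqref{eq2} the associated centre of curvature is the point
$$x_0(v)=-\frac{v(1+v^{2})}{w},\qquad y_0(v)=\frac{1+v^{2}}{w},\qquad w=\frac{A+Bv+Cv^{2}+Dv^{3}}{E},$$
a rational function of $v$; the locus in question is the closure of its image.

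To obtain the implicit equation I would eliminate $v$. Dividing the two coordinates in \eqref{eq2} gives $x_0=-v\,y_0$, hence $v=-x_0/y_0$ on the locus, and from $y_0=(1+v^{2})/w$ one gets $1+v^{2}=w\,y_0$, so that $x_0^{2}+y_0^{2}=(1+v^{2})y_0^{2}=w\,y_0^{3}$. Now multiply the relation $Ew=A+Bv+Cv^{2}+Dv^{3}$ by $y_0^{3}$: the left side becomes $E(x_0^{2}+y_0^{2})$, and on the right, after substituting $v=-x_0/y_0$, all powers of $y_0$ clear and one is left with a homogeneous cubic in $x_0,y_0$ with coefficients $\pm A,\pm B,\pm C,\pm D$. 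This is exactly an equation of the form \eqref{eq4}. As the left-hand side $E(x_0^{2}+y_0^{2})$ is quadratic and the right-hand side is a genuine cubic form, the locus is a plane cubic curve, as claimed.

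I do not expect any serious obstacle: the argument is a single elimination. The only points worth a remark are that one must pass to the closure of the parametrized set — the inflectional slopes (where $w=0$) send the centre to infinity, and over $\C$ the two isotropic slopes $v=\pm i$ both collapse to $(x_0,y_0)=(0,0)$, which is why \eqref{eq4} carries a node at the origin with tangent cone $x_0^{2}+y_0^{2}=0$ — and that one should check the exact signs of the coefficients in \eqref{eq4} against the cubic transformation law \eqref{eq3} of the centres, which has to be compatible with the way the coefficients $A,B,C,D$ of a first degree equation transform under $\mathscr P$; this is a consistency check rather than an ingredient of the proof.
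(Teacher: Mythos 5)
Your proposal is correct and is essentially the paper's own argument: the lemma is stated there as an immediate consequence of substituting the centre-of-curvature formulas \eqref{eq2} into the first-rank equation, i.e.\ the same elimination of $v$ via $x_0=-vy_0$ and $x_0^2+y_0^2=wy_0^3$ that you carry out. Your sign bookkeeping is worth noting --- the elimination actually attaches $A$ to $y_0^3$ and $D$ to $-x_0^3$ (consistent with the pattern in \eqref{eq5}), so the lettering in \eqref{eq4} as printed appears to be reversed --- but this does not affect the statement, which only concerns the type of the cubic.
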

As is well-known, the differential equation of the geodesics of any surface expressed in arbitrary coordinates is of first rank. Therefore
\begin{proposition}
\label{prop1}
If a germ of surface is mapped biholomorphically onto a germ of plane, the geodesics through a point are given by curves whose centres of curvature at the common point lie on a cubic curve of the type \eqref{eq4}.
\end{proposition}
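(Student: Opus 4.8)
The plan is to reduce Proposition~\ref{prop1} to Lemma~\ref{lem1} via the classical fact about geodesic equations. First I would recall why "the differential equation of the geodesics of any surface expressed in arbitrary coordinates is of first rank": if $y=y(x,u)$ is a (say, analytic) parametrization of the surface and the metric is $ds^2 = E\,dx^2 + 2F\,dx\,du + G\,du^2$, then the geodesic equations in the non-parametric form $u=u(x)$ are obtained by eliminating the affine parameter, and a direct computation (or the standard reference, e.g. \cite[Ch.~V]{lane}) shows that the resulting second-order ODE is exactly of the form $E(x,u)\,u^{\prime\prime} = A + Bu^\prime + Cu^{\prime2} + Du^{\prime3}$ with $A,B,C,D,E \in \mathcal O$ — that is, a first-degree (first-rank) equation in the sense of the Definition preceding equation~\eqref{eq4}. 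The coefficients $A,B,C,D,E$ are polynomial expressions in $E,F,G$ and their first derivatives (involving the Christoffel symbols), but I would not need their explicit form.

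The second step is to observe that a biholomorphism $\Phi$ from a germ of surface onto a germ of plane $(\C^2,0)$ is, in particular, a point transformation of the plane once we use coordinates $(x,u)$ on the plane, so it belongs to the pseudogroup $\mathscr P$ of \eqref{eq1} (or can be composed with such to land in it). Since a point transformation sends unparametrized integral curves of a first-rank equation to unparametrized integral curves of a first-rank equation — this is exactly the content of the transformation law \eqref{pro4}, whose action on $(v,w)=(u^\prime,u^{\prime\prime})$ is projective-linear in the sense that the fibre surface $\mathscr S_3$ is preserved (Theorem~\ref{theo1}), and the degree is an invariant by the Definition of degree — the image family of curves in the plane is still governed by an equation of the form $\tilde E\,u^{\prime\prime} = \tilde A + \tilde B u^\prime + \tilde C u^{\prime2} + \tilde D u^{\prime3}$. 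In fact one should note that "first rank/degree" is precisely preserved under $\mathscr P$ because the corresponding $2$-parameter curve in $\mathscr S_3$ is cut out by a polynomial of the same degree in $(z_1,\dots,z_5)$ before and after the action of $G$ from \eqref{eqhrhrttnr}.

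The third and final step is a direct appeal to Lemma~\ref{lem1}: for any first-degree equation $\tilde E u^{\prime\prime} = \tilde D u^{\prime3} + \tilde C u^{\prime2} + \tilde B u^\prime + \tilde A$, the locus of centres of curvature $(x_0,y_0)$, defined by the rational transformation \eqref{eq2}, of the $1$-parameter family of integral curves through a fixed point is the cubic curve
\begin{equation*}
\tilde E(x_0^2+y_0^2) = -\tilde A x_0^3 + \tilde B x_0^2 y_0 - \tilde C x_0 y_0^2 + \tilde D y_0^3,
\end{equation*}
which is of the type \eqref{eq4}. Applying this with the family being the images under $\Phi$ of the geodesics through the image point, and noting that the centres of curvature of these image curves at the common point are what the statement refers to, gives the conclusion. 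The only genuinely substantive point — and the one I would flag as the main obstacle to writing cleanly rather than a real difficulty — is justifying that the geodesic ODE really is of first rank in arbitrary coordinates; everything after that is a two-line invocation of the definitions and Lemma~\ref{lem1}. I would therefore either cite \cite[Ch.~V]{lane} for this or include a short self-contained computation of the non-parametric geodesic equation from the Euler--Lagrange equations of the energy functional.
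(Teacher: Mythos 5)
Your proposal is correct and follows essentially the same route as the paper: the paper's entire argument is the one-line observation that the geodesic equation of a surface in arbitrary coordinates is of first rank, combined with an immediate appeal to Lemma~\ref{lem1}. Your additional elaboration on why first rank is preserved under the pseudogroup $\mathscr P$ is a harmless (and arguably welcome) expansion of what the paper compresses into the phrase ``expressed in arbitrary coordinates.''
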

We also have the following 
\begin{proposition}
\label{elements}
Given Any four generic tuples $(v_1,w_1)=(u_1^\prime,u_1^{\prime\prime})$, $(v_2,w_2)=(u_2^{\prime},u_2^{\prime\prime})$, $(v_3,w_3)=(u_3^{\prime},u_3^{\prime\prime})$, $(v_4,w_4)=(u_4^{\prime},u_4^{\prime\prime})$ at the point $(x,u)$, of the higher Cauchy data plane $\C^2(u^\prime,u^{\prime\prime})$, they determine an equation of first rank; hence the four centres of curvature generate a cubic of type \eqref{eq4}.
\end{proposition}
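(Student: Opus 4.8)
The plan is to reduce the assertion to linear algebra at the single point $(x,u)$. Over that point an equation of first rank (first degree) has constant coefficients, and the statement that a tuple $(v_i,w_i)=(u_i^\prime,u_i^{\prime\prime})$ is the pair $(u^\prime,u^{\prime\prime})$ at $(x,u)$ of an integral curve of $Eu^{\prime\prime}=Du^{\prime3}+Cu^{\prime2}+Bu^{\prime}+A$ is the single homogeneous linear condition $Ew_i=Dv_i^3+Cv_i^2+Bv_i+A$. Imposing it for $i=1,2,3,4$ gives a homogeneous linear system of four equations in the five unknowns $(A,B,C,D,E)$, with coefficient matrix
\[
M=\begin{pmatrix}
1 & v_1 & v_1^2 & v_1^3 & -w_1\\
1 & v_2 & v_2^2 & v_2^3 & -w_2\\
1 & v_3 & v_3^2 & v_3^3 & -w_3\\
1 & v_4 & v_4^2 & v_4^3 & -w_4
\end{pmatrix}.
\]
Equivalently, under \eqref{lfqhfzhlzdlh} the four tuples become four points of $\mathscr S_3\subset\C P_4$, and by \theoref{theo1} together with the notion of degree an equation of first rank corresponds precisely to a hyperplane section of $\mathscr S_3$; so the content is that four points of $\mathscr S_3$ in general position span a $\C P_3$, hence lie on a unique hyperplane of $\C P_4$.

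First I would prove existence and uniqueness of the first-rank equation. The $4\times4$ minor of $M$ obtained by deleting the last column is the Vandermonde determinant $\prod_{1\le i<j\le 4}(v_j-v_i)$, which is nonzero precisely when the four slopes $v_1,\dots,v_4$ are pairwise distinct; then $\rk M=4$, the solution space of $M$ is one-dimensional, and there is, up to a common scalar, exactly one equation of first rank having the four tuples as Cauchy data at $(x,u)$. This equation is automatically nondegenerate: were $E=0$ in a nonzero solution, the cubic $Dt^3+Ct^2+Bt+A$ with $(A,B,C,D)\neq0$ would vanish at the four distinct values $v_1,\dots,v_4$, which is impossible; hence $E\neq0$, and after normalising $E=1$ we obtain a genuine projective connection of type \eqref{eqpr11}.

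For the second assertion I would invoke \lemref{lem1}. Each tuple $(v_i,w_i)$ is, by the previous step, the Cauchy datum of an integral curve of this first-rank equation through $(x,u)$; so by \lemref{lem1} its centre of curvature, computed from \eqref{eq2}, lies on the cubic \eqref{eq4} attached to the coefficients $(A,B,C,D,E)$. Thus the four centres all lie on that cubic. Moreover a cubic of the shape \eqref{eq4} depends projectively on $(A,B,C,D,E)$, and substituting \eqref{eq2} and cancelling the nonvanishing factor $(1+v_i^2)^3/w_i^3$ turns the incidence ``the $i$-th centre lies on \eqref{eq4}'' into the $i$-th row of $M$ up to a nonzero scalar and a fixed permutation of the columns; hence the linear system governing the cubic again has rank $4$, the cubic is pinned down up to a scalar by the four centres, and this is exactly what ``the four centres generate a cubic of type \eqref{eq4}'' means.

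The only delicate point is to make the genericity hypothesis explicit and to see that it cuts out a nonempty, dense, Zariski-open set. What is needed is precisely that the four slopes $v_i$ be pairwise distinct --- which gives $\rk M=4$, hence both existence and uniqueness of the first-rank equation, and automatically $E\neq0$ --- together with $w_i\neq0$ for each $i$, so that the centre \eqref{eq2} is defined. Both conditions cut out a dense open subset of the fourfold product of the higher Cauchy data plane, so ``generic'' is well posed; once they hold, the rest of the argument is pure linear algebra together with the already-established \lemref{lem1} and \theoref{theo1}.
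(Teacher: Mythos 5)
Your proof is correct and follows essentially the same route as the paper's: impose the four conditions as a linear system in the coefficients of the first-rank equation, observe that the Vandermonde determinant in the pairwise-distinct slopes $v_i$ makes it solvable, and then conclude via \lemref{lem1} that the four centres lie on a cubic of type \eqref{eq4}. Your additional points --- the homogeneous formulation with $E$, the check that $E\neq0$, and the explicit description of the genericity locus --- are welcome refinements of the paper's terser argument rather than a different method.
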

\begin{proof}
 In fact let us consider four generic tuples $(v_1,w_1)=(u_1^\prime,u_1^{\prime\prime})$, $(v_2,w_2)=(u_2^{\prime},u_2^{\prime\prime})$, $(v_3,w_3)=(u_3^{\prime},u_3^{\prime\prime})$, $(v_4,w_4)=(u_4^{\prime},u_4^{\prime\prime})$ at the point $(x,u)$, and an underdetermined equation of rank one in $(v,w)$ coordinates
 $$u^{\prime\prime}=Du^{\prime3}+Cu^{\prime2}+Bu^{\prime}+A.$$
This gives us the system in $A$, $B$, $C$, $D$
   \begin{equation}
\label{mckernan34}
\begin{split}
u_1^{\prime\prime}&=Du^{\prime3}_1+Cu^{\prime2}_1+Bu^{\prime}_1+A\\
u_2^{\prime\prime}&=Du^{\prime3}_2+Cu^{\prime2}_2+Bu^{\prime}_2+A\\
u_3^{\prime\prime}&=Du^{\prime3}_3+Cu^{\prime2}_3+Bu^{\prime}_3+A\\
u_4^{\prime\prime}&=Du^{\prime3}_4+Cu^{\prime2}_4+Bu^{\prime}_4+A.
\end{split}
\end{equation}
We may find $A$, $B$, $C$, $D$ when, for instance, $u_i^\prime\not=u_j^\prime$ for $i\not=j$.
\end{proof}
For an equation of rank $2$
\begin{equation}
\label{pro10}
\begin{split}
&A_0u^{\prime\prime2}+(B_0+B_1u^\prime+B_2u^{\prime2}+B_3u^{\prime3})u^{\prime\prime}\\&+(C_0+C_1u^\prime+C_2u^{\prime2}+C_3u^{\prime3}+C_4u^{\prime4}+C_5u^{\prime5}+C_6u^{\prime6})=0
\end{split}
\end{equation}
where the coefficients belong to $\mathcal{O}$, the central locus of the $2$-parameter family of integral curves through a fixed point is a special sextic 
\begin{equation}
\label{eq5}
\begin{split}
&A_0(x_0^2+y_0^2)^2+(B_0y_0^3-B_1x_0y_0^2+B_2x_0^2y_0-B_3 x_0^3)(x_0^2+y_0^2)+(C_0y_0^6-C_1x_0y_0^5\\
&+C_2x_0^2y_0^4-C_3x_0^3y_0^3+C_4x_0^4y_0^2-C_5x_0^5y_0+C_6x_0^6)=0.
\end{split}
\end{equation}
This sextic degenerates non-trivially when
$$C_3=C_1+C_5,\quad C_0+C_4=C_2+C_6$$
to become a quartic. Further degeneration occurs when $B_0=B_2$, $B_1=B_3$, $C_1=C_5$, and $C_0+2C_6=C_4$. In this case the central locus is the conic
\begin{equation}
\label{eq6}
A_0+B_0y_0-B_1x_0+C_6x_0^2-C_5x_0y_0+C_0y_0^2=0.
\end{equation}
For the general equation of rank $r$, the central locus is a special curve of degree $3r$.
\section{Joint absolute invariants and geometric interpretation}
\label{invkas}
 In this \secref{invkas} we study joint rational absolute invariants of the restriction of the action of $\mathscr P$ (see \eqref{pro4}) on the higher Cauchy plane $\C^2(u^\prime,u^{\prime})$ and give a geometric interpretation of our invariants in terms of the geometry of points on the point-cone over the twisted cubic curve, and finish with an example.
\subsection{Joint absolute invariants}In this subsection we want to study rational expressions depending on $n\geq4$ pairs of given jet coordinates $(u_1^\prime,u_1^{\prime\prime})$, $(u_2^\prime,u_2^{\prime\prime})$, $\ldots,$ $(u_n^\prime,u_n^{\prime\prime})$, and which are invariant under the joint action of the group $\mathscr P$ of \eqref{pro4} and \eqref{pro5}. More precisely this means we want to study the rational absolute invariants of the restriction of the induced action of $\mathscr P$ on the n-th power of the higher Cauchy data plane given by
\begin{equation}
\label{ztmohlzr}
\begin{split}
&\mathscr P\times \overbrace{\C^2\times\C^2\times\ldots\times\C^2}^\text{n times}\\
&(g,((u_1^\prime,u_1^{\prime\prime})\times\ldots\times(u_n^\prime,u_n^{\prime\prime})))\mapsto \biggl(\left(\dfrac{au_1^\prime+b}{cu_1^\prime+d},\,\;\dfrac{\lambda+\mu u_1^\prime+\nu u_1^{\prime2}+\xi u_1^{\prime3}+(ad-bc)u_1^{\prime\prime}}{(cu_1^\prime+d)^3}\right),\ldots,\\
&\left(\dfrac{au_n^\prime+b}{cu_n^\prime+d},\,\;\dfrac{\lambda+\mu u_n^\prime+\nu u_n^{\prime2}+\xi u_n^{\prime3}+(ad-bc)u_n^{\prime\prime}}{(cu_n^\prime+d)^3}\right)\biggr)
\end{split}
\end{equation}
As previously we introduce $u_1^\prime=:v_1$, $u_1^{\prime\prime}=:w_1$, etc in order to simplify the expressions. Let us then consider any number of elements
\begin{equation}
(v_1,w_1),\;(v_2,w_2),\;(v_3,w_3),\ldots,(v_n,w_n).
\end{equation}
A function $f$ of these $2n$ arbitrary given quantities is an absolute invariant for all point transformations $\mathscr P$, \eqref{eq1}, provided it is invariant under the restriction of the induced action of $\mathscr P$, or, equivalently under the eight infinitesimal generators given by \eqref{pro5}. We thus see that such an invariant must satisfy the following system of linear partial differential equations
 \begin{equation}
 \label{eq7}
 \begin{split}
 &\sum_{k=1}^n\dfrac{\partial f}{\partial v_k}=0;\quad\sum_{k=1}^n\dfrac{\partial f}{\partial w_k}=0;\quad\sum_{k=1}^nv_k\dfrac{\partial f}{\partial v_k}=0;\quad\sum_{k=1}^nw_k\dfrac{\partial f}{\partial w_k}=0\\
& \sum_{k=1}^n\left(v_k^2\dfrac{\partial f}{\partial v_k}+3v_kw_k\dfrac{\partial f}{\partial w_k}\right)=0\\
&\sum_{k=1}^nv_k\dfrac{\partial f}{\partial w_k}=0;\quad\sum_{k=1}^nv_k^2\dfrac{\partial f}{\partial w_k}=0;\quad\sum_{k=1}^nv_k^3\dfrac{\partial f}{\partial w_k}=0.
 \end{split}
 \end{equation}
 The first two of the equations \eqref{eq7} show that $f$ involves only the differences of the $v$'s and the $w$'s; while the next two show that $f$ is homogeneous in these differences. Hence $f$ is expressible in terms of $2n-4$ quantities
 \begin{equation}
 \label{eq8}
 \xi_i=\dfrac{v_i-v_1}{v_2-v_1},\quad\eta_i=\dfrac{w_i-w_1}{w_2-w_1},\quad i=3,4,\ldots,n.
 \end{equation} 
 If we introduce these as the new independent variables, the last four equations of the system \eqref{eq7} become
 \begin{equation}
 \label{eq9}
 \begin{split}
 &\sum_{i=3}^n\xi_i(\xi_1-1)+3\sum_{i=3}^n\eta_i(\xi_i-1)\dfrac{\partial f}{\partial \eta_i}=0,\\
& \sum_{3}^n(\eta_i-\xi_i)\dfrac{\partial f}{\partial \eta_i}=0,\quad\sum_{i=3}^n\xi_i(\xi_i-1)\dfrac{\partial f}{\partial \eta_i}=0;\\
&\sum_{i=3}^n\xi_i^2(\xi_i-1)\dfrac{\partial f}{\partial \eta_i}=0.
 \end{split}
 \end{equation}
 If we integrate the first of these equations \eqref{eq9}, we find that $f$ is a function of the $2n-5$ expressions
 \begin{equation}
 \label{eq10}
 \begin{split}
 &r_j=\dfrac{\xi_3(\xi_i-1)}{\xi_j(\xi_3-1)},\quad j=4,\ldots,n\\
 & s_i=\dfrac{\eta_i}{\xi_i^3},\quad i=3,\ldots,n.
 \end{split}
 \end{equation}
 The remaining equations \eqref{eq9} then become
 \begin{equation}
 \label{eq11}
 \begin{split}
& \sum_{i=4}^n(s_i-1)\dfrac{\partial f}{\partial s_i}=0\\
&\dfrac{\partial f}{\partial s_3}+\sum_{i=4}^nr_i\dfrac{\partial f}{\partial s_i}=0\\
&\dfrac{\partial f}{\partial s_3}+\sum_{i=4}^nr_i^2\dfrac{\partial f}{\partial s_i}=0.
 \end{split}
 \end{equation}
 The first of the equations \eqref{eq11} shows that we may take as the independent variables the $2n-6$ quantities
 \begin{equation}
 \label{eq12}
 \sigma_i=\dfrac{s_i-1}{s_3-1},\quad r_i,\qquad i=4,\ldots,n.
 \end{equation}
 The other equations then reduce to
 \begin{equation}
 \label{eq13}
 \begin{split}
& \sum_{i=4}^n(\sigma_i-r_i)\dfrac{\partial f}{\partial s_i}=0\\
&\sum_{i=4}^n(\sigma_i-r_i^2)\dfrac{\partial f}{\partial s_i}=0.
 \end{split}
 \end{equation}
 The next step in the solution brings in the independent variables
 \begin{equation}
 \label{eq14}
 \tau_i=\dfrac{\sigma_i-r_i}{\sigma_4-r_4},\quad r_i,\qquad i=4,\ldots,n,
 \end{equation}
 and reduces the system to the single equation
 \begin{equation}
 \label{eq15}
 \sum_{i=5}^n(r_i-r_i^2+\tau_i(r_4^2-r_4))\dfrac{\partial f}{\partial \tau_i}=0.
 \end{equation}
 Integrating this equation we find that $f$ is a function of
 \begin{equation}
 \label{eq16}
 \begin{split}
& \Omega_l=\dfrac{(r_4^2-r_4)\tau_l-(r_l^2-r_l)}{(r_4^2-r_4)\tau_5-(r_5^2-r_5)},\quad l=6,\ldots,n\\
 &r_i,\quad i=4,\ldots,n.
 \end{split}
 \end{equation}
 In conclusion we have the
 \begin{theorem}
 \label{inv}
 All (absolute) invariants, with respect to the the pseudogroup $\mathscr P$ of point transformations, \eqref{eq1}, of $n\geqslant4$ arbitrary differential elements of the second order $(v_1,w_1)$, $(v_2,w_2)$, $(v_3,w_3)$, $\ldots(v_n,w_n)$, may be expressed as functions of $2n-8$ invariants, namely: $n-3$ cross-ratios
 $$r_4,r_5,\ldots,r_n,$$
 depending only upon the first derivatives (directions) of the elements, and $n-5$ of a new type
 $$\Omega_6,\Omega_7,\ldots,\Omega_n,$$
 which involve the second derivatives (curvature). The new type appears only when there are six (or more) elements. This system of invariants is functionally complete. Furthermore, all rational invariants, i.e., all invariants which involve $v_1$, $w_1$, $\ldots,v_n$, $w_n$ rationally, can be expressed as rational functions of the fundamental invariants $(r_j)_{4\leqslant n}$ and $(\Omega_l)_{6\leqslant l\leqslant n}$.
 \end{theorem}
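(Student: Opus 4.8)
The plan is to identify the joint absolute invariants with the common solutions of the overdetermined first–order linear system \eqref{eq7}, which expresses precisely that $f$ is annihilated by each of the eight infinitesimal generators \eqref{pro5} of the action induced by $\mathscr P$ on the $n$-th power of the higher Cauchy data plane, and then to solve this system by integrating it one equation at a time via the method of characteristics. At each step I would compute a maximal set of functionally independent first integrals of the current generator, pass to these as new coordinates, and rewrite the remaining generators in the new variables; this produces a system with one fewer equation and at least one fewer independent variable. Running this through the five successive reductions — from the $2n$ quantities $(v_k,w_k)$ to the differences, then to the homogeneous ratios $\xi_i,\eta_i$ of \eqref{eq8}, then to $r_j,s_i$ of \eqref{eq10}, then to $\sigma_i,r_i$ of \eqref{eq12}, then to $\tau_i,r_i$ of \eqref{eq14}, and finally to $\Omega_l,r_i$ of \eqref{eq16} — terminates at the trivial equation, whence any invariant is a function of the $(n-3)+(n-5)=2n-8$ quantities $r_4,\dots,r_n,\Omega_6,\dots,\Omega_n$.

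The substance of the argument is the bookkeeping carried out at each stage: one must verify that the chosen new coordinates are genuinely constant along the characteristics of the generator just used (so that they are legitimate reduced variables) and that the push-forward of the surviving generators is exactly the truncated system recorded in \eqref{eq9}, then \eqref{eq11}, then \eqref{eq13}, then \eqref{eq15}. These are routine but delicate chain-rule computations, organized most cleanly by noting that the first four generators $\partial_v,\partial_w,v\partial_v,w\partial_w$ (summed over $k$) force $f$ to depend only on homogeneous expressions in the differences $v_i-v_1,w_i-w_1$; that the fifth generator acts on the $\xi_i$ with the quadratic factors $\xi_i(\xi_i-1)$ characteristic of the Möbius flow fixing the two normalized directions, so that its first integrals among the $r$-stage variables are the cross-ratios; and that the last three generators $v^m\partial_w$, $m=0,1,2$, generate the translation action of $S^3\C^2$ on the curvature coordinates (visible in the numerator of $W$ in \eqref{pro4}), eliminated successively by passing to $s_i$, $\sigma_i$, $\tau_i$, and finally $\Omega_l$.

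For functional completeness I would check that the $2n-8$ functions $r_4,\dots,\Omega_n$ are functionally independent, equivalently that the generic orbit of the $\mathscr P$-action on $(\C^2)^n$ has dimension $8$; this amounts to the generic linear independence of the eight vector fields \eqref{pro5}, which for $n\geqslant 5$ follows from the nonvanishing of a Vandermonde-type determinant in the $v_k$ (formed from the $\partial_{w_k}$-components of four of the last five generators together with the constant row, and from the $\partial_{v_k}$-components $1,v_k,v_k^2$ of the first three), the case $n=4$ being immediate. Since the infinitesimal method yields at a generic point exactly as many independent common solutions as the codimension of the generic orbit, and we have produced that many, the system is complete.

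Finally, for the statement on rational invariants I note first that every $r_j$ and $\Omega_l$ is, by construction, a rational function of the $v_k,w_k$. For the converse the strategy is to exploit the semidirect structure $GL(2,\C)\ltimes S^3\C^2$: the $GL(2,\C)$-factor acts on the directions $(v_1,\dots,v_n)$ by the diagonal Möbius action, whose field of rational invariants is classically generated by the cross-ratios $r_4,\dots,r_n$; for fixed directions the complementary $S^3\C^2$-factor acts on $(w_1,\dots,w_n)$ by the rational translations of \eqref{pro4}, whose rational invariants, once the residual scaling is removed, are generated by $\Omega_6,\dots,\Omega_n$; and one assembles the two using a rational cross-section obtained by normalizing a suitable quadruple of coordinates. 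The main obstacle is making this last point airtight — exhibiting the cross-section explicitly, checking it is rational and lies in general position, and thereby ruling out any intervening finite extension — and here the explicit formulas produced along the reduction chain, rather than a soft orbit-dimension count, are what actually carry the proof; alternatively one may invoke Rosenlicht's theorem for the transcendence degree and use the section only to exhibit the generators.
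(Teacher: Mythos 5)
Your proposal follows essentially the same route as the paper: the paper's argument is exactly the successive integration of the eight-generator system \eqref{eq7} through the chain of variables \eqref{eq8}, \eqref{eq10}, \eqref{eq12}, \eqref{eq14}, terminating in \eqref{eq16}, and you reproduce this reduction faithfully. Your additional remarks on functional completeness via the orbit-dimension count and on rational generation (cross-section or Rosenlicht) supply justifications the paper asserts but does not spell out, but they do not change the method.
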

 Using the functorial isomorphism in the \theoref{theo1} we have the
 \begin{theorem}
 \label{jointinv}
 There is a bijective correspondence between the joint absolute rational invariants with respect to the action of the restriction of $\mathscr P$ to the product of the higher Cauchy data planes $\overbrace{\C^2\times\C^2\times\ldots\times\C^2}^\text{n times}$ given in \theoref{inv}, and the geometric joint absolute invariants with respect to the action of $G$ on $\overbrace{\mathscr S_3\times\mathscr S_3\times\ldots\times\mathscr S_3}^\text{n times}$.
 \end{theorem}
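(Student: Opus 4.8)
The plan is to obtain \theoref{jointinv} as a formal consequence of the functorial isomorphism of \theoref{theo1}, upgraded to a statement about fields of rational invariants by a density argument.

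First I would record in explicit form the equivariant model underlying \theoref{theo1}. The map
\[
\iota\colon\C^2(u^\prime,u^{\prime\prime})\longrightarrow\mathscr S_3\subset\C P_4,\qquad(v,w)\longmapsto[1:v:v^2:v^3:w]
\]
is an isomorphism of varieties onto the dense open subset $\mathscr S_3\cap\{z_1\neq0\}$, with inverse $[z_1:\cdots:z_5]\mapsto(z_2/z_1,z_5/z_1)$; indeed the relations \eqref{hofkfhjuhf} force every point of $\mathscr S_3$ with $z_1\neq0$ to be of the form $[1:t:t^2:t^3:s]$, while the complementary hyperplane section $\mathscr S_3\cap\{z_1=0\}=\{z_1=z_2=z_3=0\}$ is one-dimensional. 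Comparing \eqref{pro4} with \eqref{eqhrhrttnr} under the dictionary $a=\psi_u$, $b=\psi_x$, $c=\phi_u$, $d=\phi_x$ and the same $\lambda,\mu,\nu,\xi$, one sees that $\iota$ intertwines the induced action of $\mathscr P$ on $(v,w)$ — which, by the remark after \eqref{pro5}, factors through the eight-parameter group $GL(2,\C)\ltimes S^3\C^2$, i.e. through the group $G$ of \eqref{eqhrhrttnr} by \cite{wone} — with the action of $G$ on $\mathscr S_3$. Taking $n$-fold products, the induced map $\iota_n\colon(\C^2)^n\to(\mathscr S_3)^n$ is an isomorphism onto the dense open subset where every $z_1$-coordinate is nonzero, and it is equivariant for the action \eqref{ztmohlzr} on the source and the diagonal action of $G$ on $\overbrace{\mathscr S_3\times\cdots\times\mathscr S_3}^{n}$ on the target.

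Next I would pass to rational functions. Since $(\mathscr S_3)^n$ is irreducible and $\iota_n$ is an isomorphism onto a dense open subset, pullback along $\iota_n$ gives an isomorphism of function fields $\C((\mathscr S_3)^n)\xrightarrow{\sim}\C(v_1,w_1,\dots,v_n,w_n)$. The equivariance of $\iota_n$ then shows that this isomorphism carries the subfield of $G$-invariant rational functions on $(\mathscr S_3)^n$ isomorphically onto the subfield of rational functions in the $v_i,w_i$ annihilated by the eight vector fields \eqref{pro5}, i.e. onto the joint rational $\mathscr P$-invariants of \theoref{inv}; here one uses that a rational function invariant on a dense open set is invariant everywhere, so that discarding the vertex hyperplanes $\{z_1=0\}$ costs no invariant. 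This is the asserted bijective correspondence.

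Finally, to match the correspondence with the explicit generators of \theoref{inv}, I would transport the fundamental invariants through $\iota_n$: the direction data $v_i$ correspond to the generators of the point-cone $\mathscr S_3$, equivalently to the points $[1:v_i:v_i^2:v_i^3]$ of the directrix twisted cubic, so the cross-ratios $r_4,\dots,r_n$ become classical cross-ratios of $n$-tuples of points on a twisted cubic (equivalently of generators of the cone); the curvature data $w_i$ become affine coordinates along the respective generator lines through the vertex, so the new invariants $\Omega_6,\dots,\Omega_n$ become the geometric position invariants along these lines, appearing only for $n\geqslant6$ exactly as on the analytic side, and the count $2n-\dim G=2n-8$ matches \theoref{inv}, confirming functional completeness on the geometric side. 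The only genuine obstacle is bookkeeping: checking carefully that \eqref{pro4} and \eqref{eqhrhrttnr} define the same action through $\iota$ (already implicit in \theoref{theo1} and \cite{wone}) and that restriction to $\mathscr S_3\cap\{z_1\neq0\}$ neither creates nor destroys rational invariants; both are routine once the equivariant model above is in place.
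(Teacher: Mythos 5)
Your proposal is correct and follows essentially the same route as the paper, which derives \theoref{jointinv} directly from the functorial isomorphism of \theoref{theo1}; you have simply made explicit the equivariant embedding $(v,w)\mapsto[1:v:v^2:v^3:w]$ onto $\mathscr S_3\cap\{z_1\neq0\}$, the resulting function-field isomorphism, and the density argument that the paper leaves implicit. No discrepancy in approach or substance.
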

 \subsection{Geometric interpretation of the some of the invariants: Example} 
 \label{secexa}
 \begin{definition}
 \label{mckernan}
 Let $S\subset \C P_n$, $n\geq1$, be a non-empty set of points. We say that $S$ is in linearly general position if any subset of $1\leq k\leq n$ points of $S$ spans a $(k-1)$-plane $\Lambda$.
 \end{definition}
 We have the
 \begin{lemma}
 \label{mckernan1}
 Any two sequences $p_0,p_1,\ldots,p_{n+1}$ and $q_0,q_1,\ldots,q_{n+1}$ of $n+2$ points in linear general position in $\C P_n$ are projectively equivalent, i.e., there is an element $g\in PGL(n+1,\C)$ such that $g(p_i)=q_i$. Furthermore $g$ is unique.
 \end{lemma}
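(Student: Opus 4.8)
The plan is to reduce to a normal form, the \emph{standard projective frame}: the coordinate points $e_0=[1:0:\cdots:0],\dots,e_n=[0:\cdots:0:1]$ together with the unit point $e=[1:1:\cdots:1]$ (one checks directly that these $n+2$ points are in linearly general position). I would first prove the special case in which the target sequence is this frame, namely: any $n+2$ points $p_0,\dots,p_{n+1}$ in linearly general position are carried to $e_0,\dots,e_n,e$ (in order) by a \emph{unique} $g_p\in PGL(n+1,\C)$. The general statement then follows formally. Indeed, given also $q_0,\dots,q_{n+1}$ with associated $g_q$, the element $g:=g_q^{-1}g_p$ satisfies $g(p_i)=q_i$ for all $i$; and if $g'$ is any element with $g'(p_i)=q_i$, then $g_qg'$ carries the $p_i$ to the frame, so $g_qg'=g_p$ by uniqueness of $g_p$, whence $g'=g$. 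So only the special case needs work.

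For the existence of $g_p$, choose lifts $v_0,\dots,v_n\in\C^{n+1}\setminus\{0\}$ of $p_0,\dots,p_n$. These $n+1$ points, being in linearly general position, span $\C P_n$, so $v_0,\dots,v_n$ is a basis of $\C^{n+1}$. Pick a lift $w$ of $p_{n+1}$ and write $w=\sum_{i=0}^{n}c_iv_i$. The key point is that no $c_i$ vanishes: if $c_j=0$, then $w$ lies in the span of $\{v_i:i\neq j\}$, so $p_{n+1}$ lies in the hyperplane $\langle p_i:i\neq j\rangle$ (of dimension $n-1$, since the $n$ points $p_i$, $i\neq j$, are in linearly general position), and then the $n+1$ points $\{p_i:i\neq j\}\cup\{p_{n+1}\}$ span a plane of dimension at most $n-1$, contradicting linear general position. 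Replacing each $v_i$ by $c_iv_i$ (still a basis), we may assume $w=v_0+\cdots+v_n$. Let $A\in GL(n+1,\C)$ be the isomorphism sending the $i$-th standard basis vector to $v_i$; the induced $g_p\in PGL(n+1,\C)$ satisfies $g_p(e_i)=[v_i]=p_i$ for $0\le i\le n$, and $g_p(e)=[A(1,\dots,1)]=[v_0+\cdots+v_n]=[w]=p_{n+1}$.

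For uniqueness of $g_p$ it suffices, by the reduction above, to show that an $h\in PGL(n+1,\C)$ fixing each $e_i$ and fixing $e$ must be the identity. Lift $h$ to $B\in GL(n+1,\C)$. Fixing the coordinate points forces every standard basis vector to be an eigenvector of $B$, so $B=\mathrm{diag}(\lambda_0,\dots,\lambda_n)$ with all $\lambda_i\neq0$; fixing $e=[1:\cdots:1]$ gives $B(1,\dots,1)=\mu(1,\dots,1)$ for some scalar $\mu$, i.e.\ $\lambda_0=\cdots=\lambda_n=\mu$, so $B=\mu I$ and $h=\mathrm{id}$ in $PGL(n+1,\C)$.

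I expect the only step with genuine content to be the claim that all $c_i\neq0$: this is precisely where linearly general position of the full $(n+2)$-tuple is used, over and above the spanning property of the first $n+1$ points, and everything else is routine linear algebra. One small point worth stating at the outset is that $n+1$ points in linearly general position in $\C P_n$ span $\C P_n$ --- the instance of \defiref{mckernan} for $(n+1)$-point subsets --- as this is invoked twice above.
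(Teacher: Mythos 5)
The paper states \lemref{mckernan1} without proof, treating it as the classical fact that $n+2$ points in general position form a projective frame; so there is no in-paper argument to compare against. Your proof is the standard one and it is correct: reduction to the coordinate frame $e_0,\dots,e_n,e$, existence via rescaling the lifts $v_i$ by the (nonzero) coefficients $c_i$ of $w=\sum c_iv_i$, and uniqueness via the triviality of the stabilizer of the frame (diagonal matrices fixing $[1:\cdots:1]$ are scalar). The one point of friction is not with your argument but with the paper's \defiref{mckernan}, which only requires subsets of $1\leq k\leq n$ points to span a $(k-1)$-plane. Your proof twice needs the case $k=n+1$ --- once so that $v_0,\dots,v_n$ form a basis of $\C^{n+1}$, and once to derive the contradiction when some $c_j=0$ --- and you explicitly invoke ``the instance of \defiref{mckernan} for $(n+1)$-point subsets,'' which does not literally exist in the definition as written. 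This is the right reading: with the bound $k\leq n$ taken literally the lemma is false (take $n+2$ points confined to a hyperplane, in general position there), so the definition should be understood with $1\leq k\leq n+1$, as in the usual convention. A final cosmetic remark: you state the special case as carrying the points \emph{to} the frame but construct the map in the opposite direction ($g_p(e_i)=p_i$); passing to the inverse repairs this, and the uniqueness argument is unaffected.
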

 In order to obtain a geometric interpretation for our absolute invariants, we make use of the corresponding projective geometry in $\C P_4$. If the group considered is the general projective group of $\C P_4$, i.e. $PGL(5,\C)$, and points are taken in arbitrary positions in $\C P_4$, then no absolute invariant occurs for fewer than $7$ points. This is because by using \defiref{mckernan} and \lemref{mckernan1}, any given $6$ points of $\C P_4$ can be mapped to any other given $6$ points of $\C P_4$, by an element of $PGL(5,\C)$. In the case of $7$ points, we may pass projective $3$-planes through the quadruple of points, $p_1,p_2,p_3,p_4$ giving the $3$-plane denoted $1234$, resp. $p_1,p_2,p_3,p_5$ giving the $3$-plane denoted $1235$, resp. $p_1,p_2,p_3,p_6$ giving the $3$-plane denoted $1236$, $p_1,p_2,p_3,p_7$ giving the $3$-plane denoted $1237$, see \cite[p. 2, Th. I]{sempleroth}. These are member of a pencil (as, see \cite[p.~2, Th. III]{sempleroth}, there is a unique pencil of $3$-planes with given base the projective plane generated by $p_1$, $p_2$, $p_3$), and thus have a cross-ratio (recall that a pencil of hyperplanes is the same as a projective line in the dual projective space), which is, of course an absolute invariant, under the simultaneous action of $PGL(5,\C)$ on the points. For more on invariants of points sets under the projective group we refer to \cite{conte, olver2}.
 
Rather than general invariants of a general group of points of $\C P_4$ under $PGL(5,\C)$, we want instead to find the invariants of points situated on the cone $\mathscr S_3$ with respect to the group $G$. Here invariants arise for four points. Indeed these determine four generators of the cone $\mathscr S_3$, which have a cross-ratio, namely, the cross-ratio of the four points in which these generators intersect anyone of the twisted cubics of $\mathscr S_3$. This invariant is equivalent to $r_4$. To see this we recall that the dimension of the family of twisted cubic on the cone $\mathscr S_3$ is four (see the discussion in \cite[p.~7]{wone}). We only look at what happens in the case of the twisted cubic lying on the plane $z_5=0$, which we denote by $c_3$ for simplicity. The other cases are similar. Let us choose (see \eqref{lfqhfzhlzdlh}) four generic points $p_1=\left[1:v_1:v^2_1:v^3_1:w_1\right]$, $p_2=\left[1:v_2:v^2_2:v^3_2:w_2\right]$, $p_3=\left[1:v_3:v^2_3:v^3_3:w_3\right]$ and $p_4=\left[1:v_4:v^2_4:v^3_4:w_4\right]$. They give the following four distinct points of the twisted cubic on the plane $z_5=0$, namely $\left[1:v_1:v^2_1:v^3_1:0\right]$, $\left[1:v_2:v^2_1:v^3_2:0\right]$, $\left[1:v_3:v^2_3:v^3_3:0\right]$, $\left[1:v_4:v^2_4:v^3_4:0\right]$. These four points of the considered twisted cubic have a "cross ratio" $R(p_1,p_2,p_3,p_5)$, \cite[p.~301, Cor. 4]{semplekneebone}. 

On the plane section $z_5=0$, the action of the group $G$ on the twisted cubic, boils down to the action of its subgroup given by
\begin{equation}
\label{fazmjmhfqlhk}
\begin{split}
& Z_1=d^3z_1+3cd^2z_2+3c^2dz_3+c^3z_4\\
 &Z_2=bd^2z_1+b(bc+2ad)z_2+a(ad+2bc)z_3+a^2cz_4\\
 &Z_3=b^2dz_1+d(ad+2bc)z_2+c(bc+2ad)z_3+ac^2z_4\\
 &Z_4=b^3z_1+3ab^2z_2+3a^2bz_3+a^3z_4,
 \end{split}
\end{equation}
which corresponds to the subgroup of $G$ described as follows
\begin{equation}
\label{mqmmdqjm}
\left[\left(\begin{array}{ccccc}d^3 & 3cd^2 & 3c^2d & c^3 & 0 \\bd^2 & b(bc+2ad) & a(ad+2bc) & a^2c & 0 \\b^2d & d(ad+2bc) & c(bc+2ad) & ac^2 & 0 \\b^3 & 3ab^2 & 3a^2b & a^3 & 0 \\0 & 0 & 0 & 0 & 1\end{array}\right)\right]\subset G\subset PGL(5,\C),
\end{equation}
 where $\left(\begin{array}{cc}c & d \\a & b\end{array}\right)\in GL(2,\C)$ and for $M\in GL(5,\C)$, $\left[M\right]$ means its class in $PGL(5,\C)$. One readily sees that \eqref{mqmmdqjm} is induced by the irreducible representation of $GL(2,\C)$ into $GL(4,S^3\C^2)$, where $S^3\C^2\simeq\C^4$ designates the third symmetric product of $\C^2$ with itself. Now one may show (see \cite[Th. 2]{wiltsche}, \cite[Examp. 10.9, 10.12]{harris1992}) that $c_3$ is canonically isomorphic to the projective line, and in this isomorphism, the automorphism group of $c_3$ is given by the class in $PGL(4,\C)$ of the image of the irreducible representation of $GL(2,\C)$ inside $GL(4,S^3\C^2)$, that is by the of classes of matrices given by
 
\begin{equation}
\label{mlpjohnokbb}
\left[\left(\begin{array}{cccc}d^3 & 3cd^2 & 3c^2d & c^3 \\bd^2 & b(bc+2ad) & a(ad+2bc) & a^2c  \\b^2d & d(ad+2bc) & c(bc+2ad) & ac^2  \\b^3 & 3ab^2 & 3a^2b & a^3 \end{array}\right)\right]\subset PGL(4,\C),
\end{equation}
 where $\left(\begin{array}{cc}c & d \\a & b\end{array}\right)\in GL(2,\C)$. While the complex projective line has automorphism group given by $PGL(2,\C)$, \cite[Th. 3]{wiltsche}. Since the cross ratio of four points on the projective line is invariant under the simultaneous action of $PGL(2,\C)$ on the points, it follows the "cross ratio" $R(p_1,p_2,p_3,p_4)$ of the four generically chosen points of the twisted cubic $c_3$ is invariant under the joint action of the group given by \eqref{mlpjohnokbb}, hence also with respect to $G$, since the points $p_1$, $p_2$, $p_3$, $p_4$ do not depend on the homogeneous coordinate $z_5$. Thus this gives an invariant equivalent to $r_4$.

\end{document}